\newcommand{\tnorm}{\@ifstar\@tnorms\@tnorm}
\newcommand{\@tnorms}[1]{%
  \left|\mkern-1.5mu\left|\mkern-1.5mu\left|
   #1
  \right|\mkern-1.5mu\right|\mkern-1.5mu\right|
}
\newcommand{\@tnorm}[2][]{%
  \mathopen{#1|\mkern-1.5mu#1|\mkern-1.5mu#1|}
  #2
  \mathclose{#1|\mkern-1.5mu#1|\mkern-1.5mu#1|}
}
\newtheorem{theorem}{Theorem}
\newtheorem{lemma}{Lemma}
\title{Preconditioning of a hybridizable discontinuous Galerkin
  method for Biot's consolidation model}
\author{
  E. Henr\'iquez\thanks{Department of Applied Mathematics, University of
    Waterloo, ON, Canada (\url{ehenriqu@uwaterloo.ca}),
    \url{http://orcid.org/0000-0002-0243-0368}}
  \and
  J. J. Lee\thanks{Department of Mathematics, Baylor University,
    TX, USA (\url{jeonghun_lee@baylor.edu}),
    \url{https://orcid.org/0000-0001-5201-8526}}
  \and
  S. Rhebergen\thanks{Department of Applied Mathematics, University of
    Waterloo, ON, Canada (\url{srheberg@uwaterloo.ca}),
    \url{http://orcid.org/0000-0001-6036-0356}}}
\begin{document}
\maketitle
\begin{abstract}
  We present a parameter-robust preconditioner for a hybridizable
  discontinuous Galerkin (HDG) discretization of a four-field
  formulation of Biot's consolidation model. We first determine a
  parameter-robust preconditioner for the full discretization. HDG
  methods, however, allow for static condensation. We therefore apply
  the framework presented in our previous work [arXiv:2503.05918,
  2025] to show that a reduced form of the preconditioner is also
  parameter-robust for the reduced HDG discretization. We verify the
  parameter-robustness of the preconditioner through numerical
  examples in both two and three dimensions.  
\end{abstract}
\section{Introduction}
\label{s:introduction}
Biot's consolidation model has applications in, for example, medical
research \cite{fellah2008application,guo2018subject}, geophysics
\cite{coussy2004poromechanics,teatini2006groundwater}, and engineering
\cite{pao2001fully,ferronato2004radioactive}.  Therefore, finding
numerical solutions of this problem is an active area of research,
both in the design of efficient discretization methods, see for
example,
\cite{boffi2016nonconforming,feng2018analysis,kanschat2018finite,lee2016robust,oyarzua2016locking,yi2014convergence},
and solvers, for example,
\cite{bean2017block,cai2023some,cai2015comparisons,chu2025block,kadeethum2020finite}. In
this paper we consider preconditioners for a hybridizable
discontinuous Galerkin (HDG) discretization of Biot's model. A
significant challenge in designing preconditioners for Biot's model is
that the many parameters in Biot's model can vary across several
orders of magnitude. The objective is therefore to design a
preconditioner that is robust with respect to such parameter
variations.

A general framework for parameter-robust preconditioners was presented
by Mardal and Winther \cite{mardal2011preconditioning}. This framework
has been used to design parameter-robust preconditioners for various
formulations of the Biot equations. For example,
\cite{lee2017parameter} develops block-diagonal preconditioners based
on the Mardal and Winther framework for a conforming finite element
discretization of the three-field (displacement, total pressure, fluid
pressure) formulation. The same approach was used in
\cite{lee2023analysis} to design parameter-robust preconditioners for
the displacement, total pressure, and fluid pressure formulation of
mixed finite element and stabilized methods for the Biot equations. In
\cite{chen2020robust}, the authors present parameter-robust
preconditioners of both a conforming stabilized two-field formulation
(displacement and fluid pressure) and a conforming three-field
formulation (displacement, fluid flux, and fluid pressure) of Biot's
equations using Schur complement techniques in combination with the
Mardal and Winther framework. Furthermore, \cite{rodrigo2024parameter}
introduces both block-diagonal and block-triangular preconditioners
for Biot's equation for a non-conforming three-field discretization
and a conforming stabilized three-field formulation. Additional works
related to the design of parameter-robust preconditioners include
\cite{baerland2017weakly,boon2021robust,frigo2021efficient,kraus2021uniformly,luber2024robust},
among others.

In this paper, we consider preconditioners for an HDG discretization
of the Biot equations. HDG methods were introduced by Cockburn et al.
\cite{cockburn2009hybridizable} as a computationally efficient variant
of the discontinuous Galerkin (DG) method. Computational efficiency is
achieved by static condensation in which cell degrees-of-freedom are
eliminated from the linear system; the global linear system is one for
trace unknowns only. We, in particular, consider the HDG
discretization for a four-field (displacement, fluid pressure, total
pressure, Darcy velocity) formulation of Biot’s equations that was
presented and analyzed in \cite{cesmelioglu2023analysis}. This
discretization enforces $H(\text{div})$-conformity through Lagrange
multipliers (the pressure trace unknowns), and is such that the
compressibility equation is satisfied pointwise within the elements
and mass conservation is achieved pointwise up to the projection error
of the source term. Furthermore, due to the introduction of the total
pressure and Darcy velocity, the discretization is free of volumetric
locking. Related HDG discretizations, but in which
$H(\text{div})$-conformity is enforced directly by the finite element
spaces, are presented for a two-field (displacement, pressure)
formulation of the Biot equations in \cite{fu2019high} and for a
three-field (displacement, fluid pressure, Darcy velocity) formulation
in \cite{kraus2021uniformly}. The latter work also presents a
parameter-robust preconditioner for the non-reduced linear system. A
reduced form of the preconditioner is subsequently used to solve the
statically condensed problem. An analysis of the robustness of the
reduced preconditioner, however, is not presented. An extension of
\cite{kraus2021uniformly} is presented in \cite{kraus2023hybridized}
for the quasi-static multiple-network poroelastic theory (MPET) model.

In our previous work \cite{henriquez2025parameter} we extended the
parameter-robust framework of Mardal and Winther to hybridizable
discretizations. In particular, we identified a condition that if
satisfied, then the reduced form of a parameter-robust preconditioner
for the full system will be parameter-robust for the statically
condensed problem. This framework will now be used to determine a
parameter-robust preconditioner for the HDG discretization presented
in \cite{cesmelioglu2023analysis} for Biot's equations.

This paper is organized as follows. In \cref{s:biot,s:biotdiscrete} we
present Biot's consolidation model and its HDG discretization. Uniform
well-posedness of the discretization is proven in \cref{s:uniwp}. This
result is used in \cref{s:precon} to determine parameter-robust
preconditioners. Numerical examples demonstrating parameter-robustness
of the preconditioner are given in \cref{s:numex} and conclusions are
drawn in \cref{s:conclusions}.

\section{Biot's Consolidation Model}
\label{s:biot}

We consider the following formulation of Biot's consolidation model
\cite{lee2017parameter}:
\begin{subequations}
  \label{eq:biot}
  \begin{align}
    -\nabla \cdot 2 \tilde{\mu} \varepsilon(u) + \nabla p_T
    &= \tilde{f} && \text{ in } \Omega \times I,
    \\
    -\nabla \cdot u + \lambda^{-1}(\alpha p - p_T)
    &= 0 && \text{ in } \Omega \times I,
    \\
    c_0\partial_tp + \alpha\lambda^{-1}(\alpha\partial_t p - \partial_tp_T) + \nabla \cdot \tilde{z}
    &= \tilde{g} && \text{ in } \Omega \times I,
    \\
    \tilde{\kappa}^{-1}\tilde{z} + \nabla p
    &= 0 && \text{ in } \Omega \times I,
  \end{align}
\end{subequations}
in which $u$ is the displacement, $p$ is the fluid pressure of the
fluid, $p_T$ is the total pressure, $\tilde{z}$ is the Darcy velocity,
$\tilde{f}$ is a given body force, $\tilde{g}$ is a given source/sink
term, $\tilde{\kappa} > 0$ a constant representing the permeability of
the porous media, $c_0 \ge 0$ is the specific storage coefficient,
$\alpha \in (0,1)$ is the Biot--Willis constant, and $\lambda$ and
$\tilde{\mu}$ are the Lam\'e constants. In what follows, we will use
that $2\tilde{\mu}\lambda^{-1} \le c_l$, where $c_l > 0$ is a
constant. Furthermore, $I=(0,T]$ is the time-interval with final time
$T$, $\Omega \subset \mathbb{R}^d$ ($d=2,3$) is an open and bounded
polygonal ($d=2$) or polyhedral ($d=3$) domain, and
$\varepsilon(u) := (\nabla u + (\nabla u)^T)/2$.

Discretizing Biot's consolidation model \cref{eq:biot} in time using
backward Euler time-stepping with time-step $\tau$, and defining
$z = \tau \tilde{z}$, we require to solve at each time level
$t^n = n\tau$ ($n=1,\hdots,N$ where $N=T/\tau$) the problem
\begin{subequations}
  \label{eq:biotdt}
  \begin{align}
    -\nabla \cdot \mu\varepsilon(u) + \nabla p_T
    &= f && \text{ in } \Omega,
    \\
    -\nabla \cdot u + \lambda^{-1}(\alpha p - p_T)
    &= 0 && \text{ in } \Omega,
    \\
    c_0p + \alpha\lambda^{-1}(\alpha p - p_T) + \nabla \cdot z
    &= g && \text{ in } \Omega,
    \\
    \kappa^{-1}z + \nabla p
    &= 0 && \text{ in } \Omega,
  \end{align}
\end{subequations}
where $f$ and $g$ are suitable modifications of $\tilde{f}$ and
$\tilde{g}$ and, to simplify notation in the remainder of this
manuscript, we have defined $\mu := 2\tilde{\mu}$ and
$\kappa := \tau\tilde{\kappa}$. To close the system, we prescribe
$u=0$ and $p=0$ on $\partial \Omega$. In what follows we will consider
preconditioners for a hybridizable discontinuous Galerkin
discretization of \cref{eq:biotdt}.

\section{The Hybridizable Discontinuous Galerkin Discretization}
\label{s:biotdiscrete}

We consider a triangulation $\mathcal{T}_h := \cbr[0]{K}$ consisting
of non-overlapping simplices $K$ such that
$\overline{\Omega} = \cup_{K \in \mathcal{T}_h} K$. We denote by $h_K$
the characteristic length of a cell $K$ and by $n$ the outward unit
normal vector on $\partial K$. The set of all faces is denoted by
$\mathcal{F}_h$. The union of all faces is denoted by $\Gamma_0$.

The `cell' function spaces are defined as
\begin{align*}
    V_h
    &:= \cbr[1]{v_h\in \sbr[0]{L^2(\Omega)}^d
      : \ v_h \in \sbr[0]{\mathbb{P}_k(K)}^d, \ \forall\ K\in\mathcal{T}_h},
    \\
    Q_h
    &:= \cbr[1]{q_h\in L^2(\Omega) : \ q_h \in \mathbb{P}_{k-1}(K) ,\
      \forall \ K \in \mathcal{T}_h},
\end{align*}
while the `face' function spaces are defined as
\begin{align*}
  \bar{V}_h
  &:=
    \cbr[1]{\bar{v}_h \in \sbr[0]{L^2(\Gamma_0)}^d:\ \bar{v}_h \in
    \sbr[0]{\mathbb{P}_{k}(F)}^d\ \forall\ F \in \mathcal{F}_h,\ \bar{v}_h
    = 0 \text{ on } \partial \Omega}.
  \\
  \bar{Q}_h
  &:= \cbr[1]{\bar{q}_h \in L^2(\Gamma_0) : \ \bar{q}_h \in
    \mathbb{P}_{k}(F) \ \forall\ F \in \mathcal{F}_h},
  \\
  \bar{Q}_h^0
  &:=\cbr[0]{\bar{\psi}_h \in \bar{Q}_h:
    \bar{\psi}_h=0 \text{ on } \partial \Omega}.
\end{align*}
Function space pairs are denoted in boldface, i.e.,
$\boldsymbol{V}_h := V_h \times \bar{V}_h$,
$\boldsymbol{Q}_h := Q_h \times \bar{Q}_h$, and
$\boldsymbol{Q}_h^0 := Q_h \times \bar{Q}_h^0$. We also use boldface
for the cell/facet function pair, e.g.,
$\boldsymbol{v}_h := (v_h, \bar{v}_h) \in \boldsymbol{V}_h$, and
similarly for cell/facet function pairs in $\boldsymbol{Q}_h$ and
$\boldsymbol{Q}_h^0$. Finally, we define
$\boldsymbol{X}_h := \boldsymbol{V}_h \times \boldsymbol{Q}_h \times
V_h \times \boldsymbol{Q}_h^0$.

We use the following inner product notation:
$(p,q)_{\mathcal{T}_h} := \sum_{K \in \mathcal{T}_h} (p,q)_K$ and
$\langle p,q \rangle_{\partial \mathcal{T}_h} := \sum_{K \in
  \mathcal{T}_h} \langle p,q \rangle_{\partial K}$, where
$(p,q)_K = \int_K pq \dif x$ and
$\langle p,q \rangle_{\partial K} = \int_{\partial K} pq \dif s$ if
$p,q$ are scalar. Similar definitions hold if $p,q$ are
vectors. Furthermore, we define the following forms:
\begin{subequations}
  \begin{align}
    \label{eq:d_h}
    d_h(\boldsymbol{u}_h, \boldsymbol{v}_h)
    :=& (\mu \varepsilon(u_h), \varepsilon(v_h))_{\mathcal{T}_h}
        +\langle\tfrac{\eta\mu}{h_K}
        (u_h - \bar{u}_h),v_h-\bar{v}_h\rangle_{\partial \mathcal{T}_h}
    \\ \nonumber
      &- \langle \mu \varepsilon(u_h)n,
        v_h-\bar{v}_h\rangle_{\partial \mathcal{T}_h}
        - \langle \mu \varepsilon(v_h)n,
        u_h-\bar{u}_h\rangle_{\partial \mathcal{T}_h},
    \\
    \label{eq:b_h}
    b_h(v_h, \boldsymbol{q}_h)
    :=& - (q_h, \nabla\cdot v_h )_{\mathcal{T}_h}
        + \langle \bar{q}_h, v_h \cdot n\rangle_{\partial \mathcal{T}_h},
  \end{align}
\end{subequations}
where $\eta > 1$ is the interior penalty parameter.

The HDG discretization of the Biot equations \cref{eq:biotdt} is given
by \cite{cesmelioglu2023analysis}: Find
$\boldsymbol{x}_h := (\boldsymbol{u}_h,\boldsymbol{p}_{Th}, z_h,
\boldsymbol{p}_h) \in \boldsymbol{X}_h$, such that
\begin{equation}
  \label{eq:biotcomp}
  a_h(\boldsymbol{x}_h, \boldsymbol{y}_h) = (f,v_h)_{\mathcal{T}_h} - (g,q_h)_{\mathcal{T}_h}
  \quad \forall \boldsymbol{y}_h := (\boldsymbol{v}_h,\boldsymbol{q}_{Th}, w_h,
  \boldsymbol{q}_h) \in \boldsymbol{X}_h,
\end{equation}
where
\begin{equation}
  \label{eq:definitionah}
  \begin{split}
    a_h(\boldsymbol{x}_h, \boldsymbol{y}_h)
    =& d_h(\boldsymbol{u}_h, \boldsymbol{v}_h)
    + b_h(v_h,\boldsymbol{p}_{Th})
    + b_h(u_h, \boldsymbol{q}_{Th})     
    \\
    & + \del[1]{\kappa^{-1}z_h, w_h}_{\mathcal{T}_h}     
    + b_h(w_h, \boldsymbol{p}_h)
    + b_h(z_h, \boldsymbol{q}_h)
    \\
    & - (c_0 p_h, q_h)_{\mathcal{T}_h}
    - \lambda^{-1}\del[1]{\alpha p_{h}-p_{Th}, \alpha q_h - q_{Th}}_{\mathcal{T}_h}.
  \end{split}
\end{equation}

\section{Uniform Well-Posedness}
\label{s:uniwp}

In this section we prove well-posedness of the HDG method
\cref{eq:biotcomp}.

\subsection{Inner Products and Norms}
\label{ss:inprod}

We will require the following inner products on $\boldsymbol{V}_h$,
$\boldsymbol{Q}_h$, $V_h$, and $\boldsymbol{Q}_h^0$:
\begin{subequations}
  \label{eq:inprods}
  \begin{align}
    (\boldsymbol{u}_h,\boldsymbol{v}_h)_{v}
    :=& \mu (\varepsilon(u_h), \varepsilon(v_h))_{\mathcal{T}_h}
        + \mu \eta \langle h_K^{-1}(u_h-\bar{u}_h), v_h - \bar{v}_h \rangle_{\partial \mathcal{T}_h},
    \\
    (\boldsymbol{p}_{Th},\boldsymbol{q}_{Th})_{q_T}
    :=& \mu^{-1}(p_{Th}, q_{Th})_{\mathcal{T}_h}
        + \mu^{-1} \eta^{-1} \langle h_K \bar{p}_{Th}, \bar{q}_{Th} \rangle_{\partial \mathcal{T}_h},
    \\
    (z_h,w_h)_{w}
    :=& \kappa^{-1}(z_h, w_h)_{\mathcal{T}_h},
    \\
    (\boldsymbol{p}_h, \boldsymbol{q}_h)_{q}
    :=& (c_0 + \alpha^2 \lambda^{-1}) (p_h,q_h)_{\mathcal{T}_h}
        +  \kappa (\nabla p_h, \nabla q_h)_{\mathcal{T}_h}
        + \kappa \eta \langle h_K^{-1}(p_h-\bar{p}_h), q_h-\bar{q}_h \rangle_{\partial \mathcal{T}_h}.
  \end{align}  
\end{subequations}
Note that we use a discrete $H^1$ type norm for
$\boldsymbol{p}_h$. This is because a preconditioner based on the
standard $L^2$ norm fails to give parameter-robust
preconditioning. This was verified numerically for the Darcy equations
in \cite{henriquez2025parameter}. The norms induced by these inner
products are denoted as $\tnorm{\boldsymbol{v}_h}_v$,
$\tnorm{\boldsymbol{q}_{Th}}_{q_T}$, $\tnorm{w_h}_w$, and
$\tnorm{\boldsymbol{q}_h}_q$. On $\boldsymbol{X}_h$ we define the
inner product
\begin{equation}
  \label{eq:inprodXh}
  ((\boldsymbol{u}_h,\boldsymbol{p}_{Th},z_h,\boldsymbol{p}_h),
  (\boldsymbol{v}_h,\boldsymbol{q}_{Th},w_h,\boldsymbol{q}_h))_{\boldsymbol{X}_h}  
  := (\boldsymbol{u}_h,\boldsymbol{v}_h)_{v} + (\boldsymbol{p}_{Th},\boldsymbol{q}_{Th})_{q_T}
  + (z_h,w_h)_{w} + (\boldsymbol{p}_h, \boldsymbol{q}_h)_{q},
\end{equation}
and its induced norm:
\begin{equation*}
  \tnorm{(\boldsymbol{v}_h,\boldsymbol{q}_{Th},w_h,\boldsymbol{q}_h)}_{\boldsymbol{X}_h}^2
  \\
  :=
  \tnorm{\boldsymbol{v}_h}_v^2
  + \tnorm{\boldsymbol{q}_{Th}}_{q_T}^2
  + \tnorm{w_h}_w^2
  + \tnorm{\boldsymbol{q}_h}_q^2.
\end{equation*}
Furthermore, we will require the norm
\begin{equation*}
  \tnorm{\boldsymbol{q}_h}_{1,p}^2
  := \norm[0]{\nabla q_h}_{\mathcal{T}_h}^2
  + \eta \norm[0]{h_K^{-1/2}(q_h - \bar{q}_h)}_{\partial \mathcal{T}_h}^2
  \quad \forall \boldsymbol{q}_h \in \boldsymbol{Q}_h^0.
\end{equation*}

\subsection{Boundedness and Stability}
\label{ss:bands}

In what follows, we will refer to a constant as being a uniform
constant if it is independent of the model parameters ($\mu>0$,
$\lambda\ge c_l^{-1} \mu$, $0<\alpha \le 1$, $\kappa>0$, $c_0 \ge 0$)
and the mesh size $h$. We first present some preliminary results. By
\cite[Lemma 4.3]{rhebergen2017analysis} and definition of the norms,
there exists a uniform constant $c_1>0$ such that
\begin{equation}
  \label{eq:dhbound}
  |d_h(\boldsymbol{u}_h, \boldsymbol{v}_h)|
  \le c_1 \tnorm{\boldsymbol{u}_h}_v \tnorm{\boldsymbol{v}_h}_v
  \quad
  \forall \boldsymbol{u}_h, \boldsymbol{v}_h \in \boldsymbol{V}_h.
\end{equation}
By \cite[Lemma 2]{cesmelioglu2020embedded} and \cite[Lemma
4.2]{rhebergen2017analysis}, there exists a uniform constant $c_d > 0$
such that
\begin{equation}
  \label{eq:stabdh}
  d_h(\boldsymbol{v}_h, \boldsymbol{v}_h)
  \ge c_d \tnorm{\boldsymbol{v}_h}_v^2
  \quad
  \forall \boldsymbol{v}_h \in \boldsymbol{V}_h.
\end{equation}
It is shown in \cite[Lemma 4]{kraus2021uniformly} that there exists a
uniform constant $c_2>0$ such that
\begin{equation}
  \label{eq:stabbhQh0}
  \sup_{0 \ne w_h \in V_h}
  \frac{b_h(w_h, \boldsymbol{q}_h)}{\tnorm{w_h}_w }
  \ge c_2 \kappa^{1/2}\tnorm{\boldsymbol{q}_h}_{1,p}
  \quad \forall \boldsymbol{q}_h \in \boldsymbol{Q}_h^0,    
\end{equation}
while in \cite[Lemma 1]{rhebergen2018preconditioning} and \cite[Lemma
8]{rhebergen2020embedded} it is shown that there exists a uniform
constant $c_3>0$ such that
\begin{equation}
  \label{eq:stabbhQh}
  \sup_{\boldsymbol{0} \ne \boldsymbol{v}_h \in \boldsymbol{V}_h}
  \frac{b_h(v_h, \boldsymbol{q}_{Th})}{\tnorm{\boldsymbol{v}_h}_{v} }
  \ge c_3 \tnorm{\boldsymbol{q}_{Th}}_{q_T}
  \quad \forall \boldsymbol{q}_{Th} \in \boldsymbol{Q}_h.
\end{equation}

\begin{lemma}[Boundedness]
  \label{lem:bounded}
  There exists a uniform constant $c_b > 0$ such that
  \begin{equation*}
    |a_h(\boldsymbol{x}_h, \boldsymbol{y}_h)|
    \le c_b \tnorm{\boldsymbol{x}_h}_{\boldsymbol{X}_h} \tnorm{\boldsymbol{y}_h}_{\boldsymbol{X}_h}
    \qquad
    \forall \boldsymbol{x}_h,\boldsymbol{y}_h \in \boldsymbol{X}_h.
  \end{equation*}
\end{lemma}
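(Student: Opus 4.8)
The plan is to bound $a_h$ one term at a time against the block-diagonal norm $\tnorm{\cdot}_{\boldsymbol{X}_h}$, exploiting that each inner product in \cref{eq:inprods} is weighted precisely so that the parameter factors cancel. The first term $d_h(\boldsymbol{u}_h,\boldsymbol{v}_h)$ needs no work: it is bounded by $c_1\tnorm{\boldsymbol{u}_h}_v\tnorm{\boldsymbol{v}_h}_v$ directly from \cref{eq:dhbound}, and since $\tnorm{\boldsymbol{u}_h}_v,\tnorm{\boldsymbol{v}_h}_v \le \tnorm{\cdot}_{\boldsymbol{X}_h}$ this contributes a uniform constant. The strategy for the remaining seven terms is the same: apply Cauchy--Schwarz, then rewrite the relevant quantity so that its weight matches one of the squared weights appearing in \cref{eq:inprods}.

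For the volumetric--total-pressure coupling $b_h(v_h,\boldsymbol{p}_{Th})$ I would split $b_h$ into its cell part $-(p_{Th},\nabla\cdot v_h)_{\mathcal{T}_h}$ and its facet part $\langle\bar p_{Th}, v_h\cdot n\rangle_{\partial\mathcal{T}_h}$. For the cell part I use $\nabla\cdot v_h = \mathrm{tr}\,\varepsilon(v_h)$ so that $\|\nabla\cdot v_h\|_{\mathcal{T}_h}\le\sqrt{d}\,\|\varepsilon(v_h)\|_{\mathcal{T}_h}$, after which the factors $\mu^{1/2}$ (from $\|p_{Th}\|\le\mu^{1/2}\tnorm{\boldsymbol{p}_{Th}}_{q_T}$) and $\mu^{-1/2}$ (from $\|\varepsilon(v_h)\|\le\mu^{-1/2}\tnorm{\boldsymbol{v}_h}_v$) cancel. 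For the facet part, since $\bar p_{Th}$ and $\bar v_h$ are single-valued on each face and $\bar v_h$ vanishes on $\partial\Omega$, the terms $\langle\bar p_{Th},\bar v_h\cdot n\rangle_{\partial\mathcal{T}_h}$ cancel over adjacent cells, so I may replace $v_h\cdot n$ by $(v_h-\bar v_h)\cdot n$; Cauchy--Schwarz with weights $h_K^{1/2}$ and $h_K^{-1/2}$ then pairs $\langle h_K\bar p_{Th},\bar p_{Th}\rangle^{1/2}$ with $\langle h_K^{-1}(v_h-\bar v_h), v_h-\bar v_h\rangle^{1/2}$, and the $\mu^{1/2}\eta^{1/2}$ and $(\mu\eta)^{-1/2}$ factors again cancel. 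The term $b_h(u_h,\boldsymbol{q}_{Th})$ is identical. For the Darcy--pressure couplings $b_h(w_h,\boldsymbol{p}_h)$ and $b_h(z_h,\boldsymbol{q}_h)$, I first integrate by parts cell-wise to obtain $b_h(w_h,\boldsymbol{p}_h) = (\nabla p_h, w_h)_{\mathcal{T}_h} - \langle p_h-\bar p_h, w_h\cdot n\rangle_{\partial\mathcal{T}_h}$; the volume term is handled by Cauchy--Schwarz with the cancelling weights $\kappa^{-1/2}$ (from $\|\nabla p_h\|$) and $\kappa^{1/2}$ (from $\|w_h\|$), while the facet term uses a discrete trace inequality $\|w_h\|_{\partial K}\le C h_K^{-1/2}\|w_h\|_K$, after which the $\kappa$-weights cancel and the residual factor $\eta^{-1/2}<1$ is harmless.

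The two reaction terms are immediate: $(\kappa^{-1}z_h,w_h)_{\mathcal{T}_h}$ is exactly the $(\cdot,\cdot)_w$ inner product, and $(c_0p_h,q_h)_{\mathcal{T}_h}$ is controlled by the $c_0\|p_h\|^2$ contribution inside $\tnorm{\boldsymbol{p}_h}_q^2$; both are bounded by Cauchy--Schwarz with constant one. I expect the main obstacle to be the final coupled term $\lambda^{-1}(\alpha p_h-p_{Th},\alpha q_h-q_{Th})_{\mathcal{T}_h}$, because it mixes the fluid and total pressures through the $\lambda^{-1}$ scaling and must remain uniform as $\lambda\to\infty$, $\alpha\to0$, and $\mu$ varies. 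The idea is to apply Cauchy--Schwarz and then the triangle inequality $\lambda^{-1/2}\|\alpha p_h-p_{Th}\|\le \alpha\lambda^{-1/2}\|p_h\| + \lambda^{-1/2}\|p_{Th}\|$, absorbing each piece into a \emph{different} block norm: $\alpha\lambda^{-1/2}\|p_h\|=(\alpha^2\lambda^{-1}\|p_h\|^2)^{1/2}\le\tnorm{\boldsymbol{p}_h}_q$ thanks to the $\alpha^2\lambda^{-1}$ weight built into $(\cdot,\cdot)_q$, while $\lambda^{-1/2}\|p_{Th}\|=(\lambda^{-1}\|p_{Th}\|^2)^{1/2}\le c_l^{1/2}\tnorm{\boldsymbol{p}_{Th}}_{q_T}$ using $\mu^{-1}\|p_{Th}\|^2\le\tnorm{\boldsymbol{p}_{Th}}_{q_T}^2$ together with the standing assumption $\mu\lambda^{-1}\le c_l$. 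The same bound applies to $\lambda^{-1/2}\|\alpha q_h-q_{Th}\|$, so the product is controlled uniformly by $\tnorm{\boldsymbol{x}_h}_{\boldsymbol{X}_h}\tnorm{\boldsymbol{y}_h}_{\boldsymbol{X}_h}$. Collecting the eight estimates and taking $c_b$ to be a fixed multiple of the largest constant (depending only on $d$, $\eta$, $c_l$, $c_1$, and the trace-inequality constant) yields the claim.
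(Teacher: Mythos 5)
Your proposal is correct and follows essentially the same route as the paper's proof: term-by-term Cauchy--Schwarz estimates in which the parameter weights of the inner products \cref{eq:inprods} cancel, using single-valuedness of the facet functions and the boundary condition for $b_h(v_h,\boldsymbol{q}_{Th})$, integration by parts plus a discrete trace inequality for $b_h(w_h,\boldsymbol{q}_h)$, and the assumption $\mu\lambda^{-1}\le c_l$ together with the $\alpha^2\lambda^{-1}$ weight in $(\cdot,\cdot)_q$ to absorb the coupled $\lambda^{-1}(\alpha p_h-p_{Th},\alpha q_h-q_{Th})$ term. The only difference is presentational (you spell out the weight cancellations that the paper delegates to cited lemmas and to \cref{eq:bhboundedness1}), so no substantive gap remains.
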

\begin{proof}
  By the Cauchy--Schwarz inequality, H\"older's inequality for sums,
  the single-valuedness of $\bar{v}_h$ on interior faces, and that
  $\bar{v}_h = 0$ on $\partial \Omega$, we find that there exists a
  uniform constant $c_1^b>0$ such that
  \begin{equation}
    \label{eq:bhboundedness1}
    |b_h(v_h,\boldsymbol{q}_{Th})|
    \le c_1^b \tnorm{\boldsymbol{v}_h}_v \tnorm{\boldsymbol{q}_{Th}}_{q_T}
    \quad
    \forall (\boldsymbol{v}_h,\boldsymbol{q}_{Th}) \in \boldsymbol{V}_h \times \boldsymbol{Q}_h.
  \end{equation}
  Applying integration by parts, the Cauchy--Schwarz inequality, and a
  discrete trace inequality \cite[Lemma 1.46]{di2011mathematical}, we
  find that there exists a uniform constant $c_2^b > 0$ such that
  \begin{equation*}
    |b_h(w_h,\boldsymbol{q}_h)|
    \le c_2^b \tnorm{w_h}_w \tnorm{\boldsymbol{q}_h}_q
    \quad
    \forall (w_h,\boldsymbol{q}_h) \in V_h \times \boldsymbol{Q}_h^0.
  \end{equation*}
  By the Cauchy--Schwarz inequality we find
  \begin{align*}
    \kappa^{-1}|(z_h, w_h)_{\mathcal{T}_h}| &\le \tnorm{z_h}_w \tnorm{w_h}_w
    &&
       \forall z_h,w_h \in V_h,
    \\
    c_0|(p_h,q_h)_{\mathcal{T}_h}| &\le \tnorm{\boldsymbol{p}_h}_q \tnorm{\boldsymbol{q}_h}_q
    &&
       \forall p_h,q_h \in Q_h.      
  \end{align*}
  Consider now the last term in the definition of $a_h(\cdot, \cdot)$,
  see \cref{eq:definitionah}. We have, using the Cauchy--Schwarz
  inequality and $\lambda^{-1}\mu \le c_l$,
  \begin{equation*}
    \begin{split}
      \lambda^{-1}|(\alpha p_h - p_{Th}, q_{Th})|
      &\le
      c_l^{1/2} \tnorm{\boldsymbol{p}_h}_q \tnorm{\boldsymbol{q}_{Th}}_{q_T}
      + c_l \tnorm{\boldsymbol{p}_{Th}}_{q_T} \tnorm{\boldsymbol{q}_{Th}}_{q_T},
      \\
      \lambda^{-1}|(\alpha p_h - p_{Th}, \alpha q_h)|
      &\le \tnorm{\boldsymbol{p}_h}_{q} \tnorm{\boldsymbol{q}_h}_{q}
      + c_l^{1/2}\tnorm{\boldsymbol{p}_{Th}}_{q_T} \tnorm{\boldsymbol{q}_h}_{q}.
    \end{split}
  \end{equation*}
  Combining the above inequalities with \cref{eq:dhbound}, and using
  the discrete version of H\"older's inequality, the result
  follows.
\end{proof}

\begin{lemma}[Stability]
  \label{lem:stability}
  There exists a uniform constant $c_s > 0$ such that
  \begin{equation}
    \label{eq:infsup}
    c_s \le \inf_{\boldsymbol{x}_h \in \boldsymbol{X}_h} \sup_{\boldsymbol{y}_h \in \boldsymbol{X}_h}
    \frac{a_h(\boldsymbol{x}_h, \boldsymbol{y}_h)}{\tnorm{\boldsymbol{x}_h}_{\boldsymbol{X}_h} \tnorm{\boldsymbol{y}_h}_{\boldsymbol{X}_h}}.
  \end{equation}
\end{lemma}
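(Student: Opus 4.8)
The plan is to prove the inf-sup condition \cref{eq:infsup} by the standard supremizer construction: for an arbitrary $\boldsymbol{x}_h = (\boldsymbol{u}_h, \boldsymbol{p}_{Th}, z_h, \boldsymbol{p}_h) \in \boldsymbol{X}_h$ I would build an explicit test function $\boldsymbol{y}_h \in \boldsymbol{X}_h$ for which $a_h(\boldsymbol{x}_h, \boldsymbol{y}_h) \gtrsim \tnorm{\boldsymbol{x}_h}_{\boldsymbol{X}_h}^2$ while $\tnorm{\boldsymbol{y}_h}_{\boldsymbol{X}_h} \lesssim \tnorm{\boldsymbol{x}_h}_{\boldsymbol{X}_h}$, with both hidden constants uniform. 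Taking the supremum over $\boldsymbol{y}_h$ and dividing then produces the uniform constant $c_s$.

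The first step is a diagonal test function. Testing with $\boldsymbol{y}_h^0 := (\boldsymbol{u}_h, -\boldsymbol{p}_{Th}, z_h, -\boldsymbol{p}_h)$, the two off-diagonal couplings $b_h(v_h, \boldsymbol{p}_{Th}) + b_h(u_h, \boldsymbol{q}_{Th})$ and $b_h(w_h, \boldsymbol{p}_h) + b_h(z_h, \boldsymbol{q}_h)$ cancel by linearity in the second argument, and the coercivity \cref{eq:stabdh} of $d_h$ gives
\[
  a_h(\boldsymbol{x}_h, \boldsymbol{y}_h^0)
  \ge c_d \tnorm{\boldsymbol{u}_h}_v^2 + \tnorm{z_h}_w^2
  + c_0\norm[0]{p_h}_{\mathcal{T}_h}^2
  + \lambda^{-1}\norm[0]{\alpha p_h - p_{Th}}_{\mathcal{T}_h}^2 .
\]
This controls $\tnorm{\boldsymbol{u}_h}_v$, $\tnorm{z_h}_w$, and the two mass contributions, but neither $\tnorm{\boldsymbol{p}_{Th}}_{q_T}$ nor the discrete-$H^1$ part $\kappa\tnorm{\boldsymbol{p}_h}_{1,p}^2$ of $\tnorm{\boldsymbol{p}_h}_q^2$, which must be recovered from the inf-sup bounds.

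The second step adds two pressure corrections. Using \cref{eq:stabbhQh} I select $\boldsymbol{v}_h^\ast$, scaled so that $\tnorm{\boldsymbol{v}_h^\ast}_v = \tnorm{\boldsymbol{p}_{Th}}_{q_T}$ and $b_h(v_h^\ast, \boldsymbol{p}_{Th}) \ge c_3\tnorm{\boldsymbol{p}_{Th}}_{q_T}^2$; testing with $(\boldsymbol{v}_h^\ast, 0, 0, 0)$ yields $c_3\tnorm{\boldsymbol{p}_{Th}}_{q_T}^2$ plus the cross term $d_h(\boldsymbol{u}_h, \boldsymbol{v}_h^\ast)$, bounded by \cref{eq:dhbound}. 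Likewise, using \cref{eq:stabbhQh0} I select $w_h^\ast$ with $\tnorm{w_h^\ast}_w = \kappa^{1/2}\tnorm{\boldsymbol{p}_h}_{1,p}$ and $b_h(w_h^\ast, \boldsymbol{p}_h) \ge c_2\kappa\tnorm{\boldsymbol{p}_h}_{1,p}^2$; testing with $(0,0,w_h^\ast,0)$ yields $c_2\kappa\tnorm{\boldsymbol{p}_h}_{1,p}^2$ plus the cross term $(\kappa^{-1}z_h, w_h^\ast)_{\mathcal{T}_h} = (z_h, w_h^\ast)_w \le \tnorm{z_h}_w\tnorm{w_h^\ast}_w$. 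Setting $\boldsymbol{y}_h := \boldsymbol{y}_h^0 + \delta_1(\boldsymbol{v}_h^\ast,0,0,0) + \delta_2(0,0,w_h^\ast,0)$, I would absorb the two cross terms by Young's inequality into the already-controlled $\tnorm{\boldsymbol{u}_h}_v^2$ and $\tnorm{z_h}_w^2$; choosing $\delta_1,\delta_2$ small but uniform gives a lower bound proportional to $\tnorm{\boldsymbol{u}_h}_v^2 + \tnorm{z_h}_w^2 + c_0\norm[0]{p_h}_{\mathcal{T}_h}^2 + \lambda^{-1}\norm[0]{\alpha p_h - p_{Th}}_{\mathcal{T}_h}^2 + \tnorm{\boldsymbol{p}_{Th}}_{q_T}^2 + \kappa\tnorm{\boldsymbol{p}_h}_{1,p}^2$.

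The final step recovers the remaining mass term $\alpha^2\lambda^{-1}\norm[0]{p_h}_{\mathcal{T}_h}^2$ of $\tnorm{\boldsymbol{p}_h}_q^2$, which the estimate above still omits. Writing $\alpha p_h = (\alpha p_h - p_{Th}) + p_{Th}$ and using the triangle inequality together with the parameter bound $\lambda^{-1}\mu \le c_l$ gives $\alpha^2\lambda^{-1}\norm[0]{p_h}_{\mathcal{T}_h}^2 \lesssim \lambda^{-1}\norm[0]{\alpha p_h - p_{Th}}_{\mathcal{T}_h}^2 + \tnorm{\boldsymbol{p}_{Th}}_{q_T}^2$, both of which are controlled, so that $a_h(\boldsymbol{x}_h, \boldsymbol{y}_h) \gtrsim \tnorm{\boldsymbol{x}_h}_{\boldsymbol{X}_h}^2$. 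The bound $\tnorm{\boldsymbol{y}_h}_{\boldsymbol{X}_h} \lesssim \tnorm{\boldsymbol{x}_h}_{\boldsymbol{X}_h}$ is immediate, since $\tnorm{\boldsymbol{y}_h^0}_{\boldsymbol{X}_h} = \tnorm{\boldsymbol{x}_h}_{\boldsymbol{X}_h}$ and the corrections have norms $\delta_1\tnorm{\boldsymbol{p}_{Th}}_{q_T}$ and $\delta_2\kappa^{1/2}\tnorm{\boldsymbol{p}_h}_{1,p} \le \delta_2\tnorm{\boldsymbol{p}_h}_q$. I expect the main obstacle to be the \emph{uniform} balancing in the second step: the cross terms pair quantities weighted by the disparate parameters $\mu$, $\lambda$, $\kappa$, $c_0$, and $\delta_1,\delta_2$ must be fixed independently of all of them. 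This is made possible precisely because each supremizer is normalized in the very norm appearing in the diagonal estimate, so that the parameter weights match on both sides of every Young's inequality.
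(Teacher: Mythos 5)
Your proposal is correct and follows essentially the same route as the paper: the same test function $\boldsymbol{y}_h = (\boldsymbol{u}_h + \delta_1\tilde{\boldsymbol{u}}_h, -\boldsymbol{p}_{Th}, z_h + \delta_2\tilde{z}_h, -\boldsymbol{p}_h)$ built from the two inf-sup supremizers \cref{eq:bhztilde,eq:bhutilde}, with the cross terms absorbed by Young's inequality into the coercive diagonal contributions. The only (cosmetic) difference is how the $\alpha^2\lambda^{-1}\norm[0]{p_h}_{\mathcal{T}_h}^2$ piece of $\tnorm{\boldsymbol{p}_h}_q^2$ is recovered: you obtain it a posteriori from $\lambda^{-1}\norm[0]{\alpha p_h - p_{Th}}_{\mathcal{T}_h}^2$ and $\tnorm{\boldsymbol{p}_{Th}}_{q_T}^2$ via the triangle inequality and $\mu\lambda^{-1}\le c_l$, whereas the paper expands the $\lambda^{-1}$ quadratic form and tunes an additional Young parameter $\delta_3$ inside the main estimate — both are valid and yield the same uniform constant structure.
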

\begin{proof}
  To prove \cref{eq:infsup} we will show that for all
  $\boldsymbol{x}_h \in \boldsymbol{X}_h$ there exists a
  $\boldsymbol{y}_h = \boldsymbol{y}_h(\boldsymbol{x}_h) \in
  \boldsymbol{X}_h$ and positive uniform constants $c_{as},c_b$ such
  that
  \begin{subequations}
    \begin{align}
      \label{eq:tnormxXhcs}
      a_h(\boldsymbol{x}_h, \boldsymbol{y}_h) & \ge c_{as} \tnorm{\boldsymbol{x}_h}_{\boldsymbol{X}_h}^2,
      \\
      \label{eq:tnormyXhcb}
      \tnorm{\boldsymbol{y}_h}_{\boldsymbol{X}_h} & \le c_b \tnorm{\boldsymbol{x}_h}_{\boldsymbol{X}_h}.      
    \end{align}
  \end{subequations}
  A consequence of \cref{eq:stabbhQh0,eq:stabbhQh} is that for given
  $\boldsymbol{p}_h \in \boldsymbol{Q}_h^0$ and
  $\boldsymbol{p}_{Th} \in \boldsymbol{Q}_h$ there exist
  $\tilde{z}_h \in V_h$ and
  $\tilde{\boldsymbol{u}}_h \in \boldsymbol{V}_h$ such that
  \begin{subequations}
    \label{eq:infsupresults}
    \begin{align}
      \label{eq:bhztilde}
      b_h(\tilde{z}_h,\boldsymbol{p}_h) = \kappa \tnorm{\boldsymbol{p}_h}_{1,p}^2
      &\quad\text{ and } \quad \tnorm{\tilde{z}_h}_w \le c_2^{-1}\kappa^{1/2} \tnorm{\boldsymbol{p}_h}_{1,p},
      \\
      \label{eq:bhutilde}
      b_h(\tilde{u}_h,\boldsymbol{p}_{Th}) = \tnorm{\boldsymbol{p}_{Th}}_{q_T}^2
      &\quad\text{ and } \quad \tnorm{\tilde{\boldsymbol{u}}_h}_v \le c_3^{-1} \tnorm{\boldsymbol{p}_{Th}}_{q_T}.
    \end{align}
  \end{subequations}
  Given a non-null
  $\boldsymbol{x}_h :=
  (\boldsymbol{u}_h,\boldsymbol{p}_{Th},z_h,\boldsymbol{p}_h) \in
  \boldsymbol{X}_h$, we define
  $\boldsymbol{y}_h :=
  (\boldsymbol{v}_h,\boldsymbol{q}_{Th},w_h,\boldsymbol{q}_h) \in
  \boldsymbol{X}_h$ such that
  \begin{equation*}
    \boldsymbol{v}_h = \boldsymbol{u}_h + \delta_1 \tilde{\boldsymbol{u}}_h,
    \quad
    \boldsymbol{q}_{Th} = - \boldsymbol{p}_{Th},
    \quad
    w_h = z_h + \delta_2 \tilde{z}_h,
    \quad\text{and}\quad
    \boldsymbol{q}_h = -\boldsymbol{p}_h,
  \end{equation*}
  where $\delta_1,\delta_2$ are positive constants to be determined.

  We first prove \cref{eq:tnormxXhcs}. Using
  \cref{eq:dhbound,eq:infsupresults,eq:stabdh}, the Cauchy--Schwarz
  inequality, and Young's inequality with constant $\delta_3 > 0$, we
  find that:
  \begin{align*}
    a_h(\boldsymbol{x}_h, \boldsymbol{y}_h)
    =& d_h(\boldsymbol{u}_h, \boldsymbol{u}_h)
       + \delta_1 d_h(\boldsymbol{u}_h, \tilde{\boldsymbol{u}}_h)
       + \delta_1 b_h(\tilde{u}_h,\boldsymbol{p}_{Th})
       + \del[1]{\kappa^{-1}z_h, z_h}_{\mathcal{T}_h}
       + \delta_2 \del[1]{\kappa^{-1}z_h, \tilde{z}_h}_{\mathcal{T}_h}
       + \delta_2 b_h(\tilde{z}_h, \boldsymbol{p}_h)
    \\
     & + (c_0 p_h, p_h)_{\mathcal{T}_h}
       + \lambda^{-1}\del[1]{\alpha p_{h}-p_{Th}, \alpha p_h - p_{Th}}_{\mathcal{T}_h}
    \\
    \ge& (c_d - \tfrac{\delta_1}{2} c_1^2c_3^{-2}) \tnorm{\boldsymbol{u}_h}_v^2
         + \tfrac{\delta_1}{2}\tnorm{\boldsymbol{p}_{Th}}_{q_T}^2
         + (1 - \tfrac{\delta_2}{2}c_2^{-2})\tnorm{z_h}_w^2
         + \tfrac{\delta_2}{2} \kappa \tnorm{\boldsymbol{p}_h}_{1,p}^2
         + c_0\norm[0]{p_h}_{\mathcal{T}_h}^2
    \\
     &  + (1-\delta_3)\lambda^{-1}\alpha^2\norm[0]{p_h}_{\mathcal{T}_h}^2
       + \lambda^{-1}\norm[0]{p_{Th}}_{\mathcal{T}_h}^2
       - \tfrac{1}{4}\delta_3^{-1} \lambda^{-1} \norm[0]{p_{Th}}_{\mathcal{T}_h}^2.
  \end{align*}
  Choosing $\delta_1 = c_dc_3^2c_1^{-2}$, $\delta_2 = c_2^2$,
  $\delta_3 = 1/(4+c_dc_3^2c_1^{-2}c_l^{-1})$, and using $\lambda^{-1}\mu \le c_l$, we find
  that
  \begin{align*}
    a_h(\boldsymbol{x}_h, \boldsymbol{y}_h)
    \ge & \tfrac{c_d}{2} \tnorm{\boldsymbol{u}_h}_v^2
          + \tfrac{c_dc_3^2}{4c_1^2}\tnorm{\boldsymbol{p}_{Th}}_{q_T}^2
          + \tfrac{1}{2}\tnorm{z_h}_w^2
          + \tfrac{c_2^2}{2} \kappa \tnorm{\boldsymbol{p}_h}_{1,p}^2
          + c_0\norm[0]{p_h}_{\mathcal{T}_h}^2
    \\
        & + (1-1/(4+c_dc_3^2c_1^{-2}c_l^{-1}))\lambda^{-1}\alpha^2\norm[0]{p_h}_{\mathcal{T}_h}^2.
  \end{align*}
  Defining
  $c_s = \tfrac{1}{2}\min(c_d, \tfrac{1}{2}c_dc_3^2c_1^{-2}, 1,
  c_2^2, 2(1-1/(4+c_dc_3^2c_1^{-2}c_l^{-1})))$ we obtain
  \cref{eq:tnormxXhcs}.

  We now prove \cref{eq:tnormyXhcb}. Using \cref{eq:bhutilde},
  \begin{equation}
    \label{eq:vnormvhuut}
    \tnorm{\boldsymbol{v}_h}_v^2
    \le 2 \tnorm{\boldsymbol{u}_h}_v^2 + 2\delta_1^2\tnorm{\tilde{\boldsymbol{u}}_h}_v^2
    \le 2 \tnorm{\boldsymbol{u}_h}_v^2 + 2\delta_1^2c_3^{-2} \tnorm{\boldsymbol{p}_{Th}}_{q_T}^2,
  \end{equation}
  and using \cref{eq:bhztilde},
  \begin{equation}
    \label{eq:wnormwhzzt}
    \begin{split}
      \tnorm{w_h}_w^2
      \le 2 \tnorm{z_h}_w^2 + 2 \delta_2^2 \tnorm{\tilde{z}_h}_w^2
      &\le 2 \tnorm{z_h}_w^2 + 2 \delta_2^2 c_2^{-2} \kappa \tnorm{\boldsymbol{p}_h}_{1,p}^2
      \\
      &\le 2 \tnorm{z_h}_w^2 + 2 \delta_2^2 c_2^{-2} \tnorm{\boldsymbol{p}_h}_{q}^2.
    \end{split}
  \end{equation}
  \Cref{eq:tnormyXhcb} follows by combining
  \cref{eq:vnormvhuut,eq:wnormwhzzt} and noting that
  $\tnorm{\boldsymbol{q}_{Th}}_{q_T}=\tnorm{\boldsymbol{p}_{Th}}_{q_T}$
  and
  $\tnorm{\boldsymbol{q}_{h}}_{q}=\tnorm{\boldsymbol{p}_{h}}_{q}$.
\end{proof}

An immediate consequence of \cref{lem:bounded,lem:stability} is the
uniform well-posedness of the HDG method \cref{eq:biotcomp}.

\section{Preconditioning}
\label{s:precon}

Let $\boldsymbol{X}_h^{\ast}$ be the dual space of $\boldsymbol{X}_h$
and denote by
$\langle \cdot, \cdot
\rangle_{\boldsymbol{X}_h^{\ast},\boldsymbol{X}_h}$ the duality
pairing between $\boldsymbol{X}_h$ and its dual. The discrete Biot
problem \cref{eq:biotcomp} is then equivalent to finding
$\boldsymbol{x}_h \in \boldsymbol{X}_h$ such that
\begin{equation}
  \label{eq:biotcompA}
  A \boldsymbol{x}_h = \boldsymbol{f}_h \quad \text{in} \quad \boldsymbol{X}_h^{\ast},
\end{equation}
where $A : \boldsymbol{X}_h \to \boldsymbol{X}_h^{\ast}$ is defined by
$\langle A \boldsymbol{x}_h, \boldsymbol{y}_h
\rangle_{\boldsymbol{X}_h^{\ast},\boldsymbol{X}_h} =
a_h(\boldsymbol{x}_h, \boldsymbol{y}_h)$ for all
$\boldsymbol{x}_h,\boldsymbol{y}_h \in \boldsymbol{X}_h$. Since
\cref{lem:bounded,lem:stability} hold with uniform constants $c_b$ and
$c_s$, it is known that the preconditioner
$P^{-1} : \boldsymbol{X}_h^{\ast} \to \boldsymbol{X}_h$ defined by the
inner product $(\cdot, \cdot)_{\boldsymbol{X}_h}$, i.e.,
$\langle P \boldsymbol{x}_h, \boldsymbol{y}_h
\rangle_{\boldsymbol{X}_h^{\ast},\boldsymbol{X}_h} =
(\boldsymbol{x}_h,\boldsymbol{y}_h)_{\boldsymbol{X}_h}$, is
parameter-robust, i.e., the condition number of the preconditioned
system is independent of the parameters $\mu > 0$, $\lambda > 0$ (with
$\mu \lambda^{-1} \le c_l$), $\kappa > 0$, $0<\alpha\le 1$, $c_0\ge 0$
(see \cite{mardal2011preconditioning,lee2017parameter}). For an HDG
discretization, however, it is preferred to solve the statically
condensed form of \cref{eq:biotcompA}. In this section we therefore
determine a parameter-robust preconditioner for this reduced form.

\subsection{Preconditioning the Statically Condensed Problem}
\label{ss:preconSC}

We briefly summarize the general framework presented in
\cite{henriquez2025parameter} to determine a parameter-robust
preconditioner for the statically condensed form of \cref{eq:biotcomp}
given the preconditioner $P$. Let
$X_h := V_h \times Q_h \times V_h \times Q_h$ denote the space of all
cell-based functions and
$\bar{X}_h := \bar{V}_h \times \bar{Q}_h \times \bar{Q}_h^0$ the space
of all face-based functions. Let
$x_h := (u_h,p_{Th},z_h,p_h) \in X_h$,
$\bar{x}_h := (\bar{u}_h,\bar{p}_{Th},\bar{p}_h) \in \bar{X}_h$,
$\boldsymbol{x}_h=(x_h,\bar{x}_h)$, and write \cref{eq:biotcompA} in
block form as
\begin{equation}
  \label{eq:blockA}
  \begin{bmatrix}
    A_{11} & A_{21}^T
    \\
    A_{21} & A_{22}
  \end{bmatrix}
  \begin{bmatrix}
    x_h \\ \bar{x}_h
  \end{bmatrix}
  =
  \begin{bmatrix}
    f_h \\ 0
  \end{bmatrix}.
\end{equation}
Here $f_h$ corresponds to the discrete form of $(f, 0, g, 0)$ in
\cref{eq:biotdt}. We can also write the preconditioner $P$ in block
form:
\begin{equation*}
  P =
  \begin{bmatrix}
    P_{11} & P_{21}^T
    \\
    P_{21} & P_{22}
  \end{bmatrix},
\end{equation*}
where $P_{11} : X_h \to \bar{X}_h^{\ast}$,
$P_{21} : X_h \to \bar{X}_h^{\ast}$, and
$P_{22} : \bar{X}_h \to \bar{X}_h^{\ast}$.

Eliminating cell-based functions, the reduced form of \cref{eq:blockA}
is given by
\begin{equation}
  \label{eq:SAbiot}
  S_A \bar{x}_h = \bar{b}_h,
\end{equation}
where $S_A := A_{22} - A_{21}A_{11}^{-1}A_{21}^T$ is the Schur
complement of $A$ and $\bar{b}_h := -
A_{21}A_{11}^{-1}f_h$. The Schur complement
$S_P : \bar{X}_h \to \bar{X}_h^{\ast}$ of $P$ is defined as
$S_P := P_{22} - P_{21}P_{22}^{-1}P_{21}^T$ and it defines an inner
product on $\bar{X}_h$:
\begin{equation*}
  \langle S_P \bar{x}_h, \bar{y}_h \rangle_{\bar{X}_h^{\ast},\bar{X}_h}
  = (\bar{x}_h, \bar{y}_h)_{\bar{X}_h}
  \quad \forall \bar{x}_h,\bar{y}_h \in \bar{X}_h.
\end{equation*}
The norm induced by this inner product is denoted by
$\norm[0]{\cdot}_{\bar{X}_h}$. In \cite[Theorem
1]{henriquez2025parameter} we showed that $S_P$ is a
parameter-robust preconditioner for \cref{eq:SAbiot} provided there
exists a uniform constant $c_p > 0$ such that
\begin{equation}
  \label{eq:thm23a}
  \tnorm{(-A_{11}^{-1}A_{21}^T\bar{x}_h, \bar{x}_h)}_{\boldsymbol{X}_h} \le c_p \norm[0]{\bar{x}_h}_{\bar{X}_h}
  \quad \forall \bar{x}_h \in \bar{X}_h.
\end{equation}
In the remainder of this section we prove the existence of the uniform
constant $c_p$.

\subsection{Local Solvers}
\label{ss:prelims}

In this section we define the HDG local solvers and present some
useful results.

\subsubsection{Local Solvers for Biot's model}
\label{sss:localsolversBiot}

Let $t_h := (u_h,p_{Th},z_h,p_h) \in X_h$,
$\bar{t}_h := (\bar{m}_h,\bar{r}_{Th},\bar{r}_h) \in \bar{X}_h$,
$y_h := (v_h,q_{Th},w_h,q_h) \in X_h$, and
$\bar{y}_h := (\bar{v}_h,\bar{q}_{Th},\bar{q}_h) \in
\bar{X}_h$. Furthermore, let $s \in L^2(\Omega)^d$ and
$\tilde{s} \in L^2(\Omega)$. We define the local bilinear and linear
forms for each cell $K \in \mathcal{T}_h$ as:
\begin{equation}
  \label{eq:localah}
  \begin{split}
    a_K(t_h,y_h)
    &:= a_h((t_h,0),(y_h,0))|_K,
    \\
    L_K(y_h)
    &:= (s,v_h)_K - (\tilde{s},q_h)_K - a_h((0,\bar{t}_h),(y_h,0))|_K,
  \end{split}
\end{equation}
where by $a_h(\cdot, \cdot)|_K$ we mean the restriction of
$a_h(\cdot, \cdot)$ to cell $K \in \mathcal{T}_h$. The functions
$u_h^L(\bar{t}_h,s,\tilde{s}) \in V_h$,
$p_{Th}^L(\bar{t}_h,s,\tilde{s}) \in Q_h$,
$z_h^L(\bar{t}_h,s,\tilde{s}) \in V_h$,
$p_h^L(\bar{t}_h,s,\tilde{s}) \in Q_h$ are defined to be such that
their restriction to cell $K$ satisfies the following local problem
for given $\bar{t}_h \in \bar{X}_h$, $s \in L^2(\Omega)^d$, and
$\tilde{s} \in L^2(\Omega)$:
\begin{equation}
  \label{eq:localproblembiot}
  a_K(t_h^L,y_h) = L_K(y_h) \quad \forall y_h \in V(K) \times Q(K) \times V(K) \times Q(K),
\end{equation}
where $t_h^L := (u_h^L,p_{Th}^L,z_h^L,p_h^L)$,
$V(K) := \sbr[0]{\mathbb{P}_k(K)}^d$, and
$Q(K) := \mathbb{P}_{k-1}(K)$.

The local solution to \cref{eq:localproblembiot} is now used to
eliminate $x_h = (u_h,p_{Th},z_h,p_h)$ from \cref{eq:biotcomp}. We
summarize this in the following lemma, the proof of which is omitted
as the steps are identical to those used to prove \cite[Lemma
4]{rhebergen2018preconditioning}.

\begin{lemma}
  \label{lem:redvarform}
  Define $u_h^f := u_h^L(0,f,g)$, $p_{Th}^f := p_{Th}^L(0,f,g)$,
  $z_h^f := z_h^L(0,f,g)$, and $p_h^f := p_h^L(0,f,g)$ for given
  $f \in L^2(\Omega)^d$, and $g \in L^2(\Omega)$. Furthermore, for all
  $\bar{y}_h := (\bar{v}_h,\bar{q}_{Th},\bar{q}_h) \in \bar{X}_h$
  define $l_u(\bar{y}_h) := u_h^L(\bar{y}_h,0,0)$,
  $l_{p_T}(\bar{y}_h) := p_{Th}^L(\bar{y}_h,0,0)$,
  $l_z(\bar{y}_h) := z_h^L(\bar{y}_h,0,0)$, and
  $l_p(\bar{y}_h) := p_h^L(\bar{y}_h,0,0)$. Let
  $\bar{x}_h := (\bar{u}_h,\bar{p}_{Th},\bar{p}_h) \in \bar{X}_h$
  solve
  \begin{equation*}
    \bar{a}_h(\bar{x}_h, \bar{y}_h) = (f, l_u(\bar{y}_h))_{\mathcal{T}_h} - (g, l_p(\bar{y}_h))_{\mathcal{T}_h}
    \quad \forall \bar{y}_h \in \bar{X}_h,
  \end{equation*}
  where
  \begin{align*}
    \bar{a}_h(\bar{x}_h, \bar{y}_h)
    =& d_h((l_u(\bar{x}_h),\bar{u}_h),(l_u(\bar{y}_h),\bar{v}_h))
       + \del[1]{\kappa^{-1}l_z(\bar{x}_h), l_z(\bar{y}_h)}_{\mathcal{T}_h}     
    \\
     & + b_h(l_u(\bar{y}_h), (l_{p_T}(\bar{x}_h),\bar{p}_{Th}))
       + b_h(l_u(\bar{x}_h), (l_{p_T}(\bar{y}_h),\bar{q}_{Th}))
    \\
     & + b_h(l_z(\bar{y}_h), (l_{p}(\bar{x}_h),\bar{p}_{h}))
       + b_h(l_z(\bar{x}_h), (l_{p}(\bar{y}_h),\bar{q}_{h}))
    \\
     & - (c_0 l_p(\bar{x}_h), l_p(\bar{y}_h))_{\mathcal{T}_h}
       - \lambda^{-1}\del[1]{\alpha l_p(\bar{x}_h)-l_{p_T}(\bar{x}_h), \alpha l_p(\bar{y}_h) - l_{p_T}(\bar{y}_h)}_{\mathcal{T}_h}.    
  \end{align*}
  Furthermore, let $u_h = u_h^f + l_u(\bar{x}_h)$,
  $p_{Th} = p_{Th}^f + l_{p_T}(\bar{x}_h)$,
  $z_h = z_h^f + l_z(\bar{x}_h)$, and $p_h = p_h^f +
  l_p(\bar{x}_h)$. Then
  $(u_h, \bar{u}_h, p_{Th}, \bar{p}_{Th}, z_h, p_h, \bar{p}_h)$ solves
  \cref{eq:biotcomp}.
\end{lemma}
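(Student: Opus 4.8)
The plan is to prove the converse direction as stated: assuming $\bar{x}_h$ solves the reduced problem and setting $u_h = u_h^f + l_u(\bar{x}_h)$, $p_{Th} = p_{Th}^f + l_{p_T}(\bar{x}_h)$, $z_h = z_h^f + l_z(\bar{x}_h)$, $p_h = p_h^f + l_p(\bar{x}_h)$, I will verify that the resulting $\boldsymbol{x}_h$ satisfies \cref{eq:biotcomp} for every $\boldsymbol{y}_h \in \boldsymbol{X}_h$. Before any of this, however, I must check that the local problem \cref{eq:localproblembiot} is uniquely solvable, so that $u_h^f,p_{Th}^f,z_h^f,p_h^f$ and the linear maps $l_u,l_{p_T},l_z,l_p$ are well defined. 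This is the only genuinely analytic step, and I expect it to be the main obstacle. Since \cref{eq:localproblembiot} is a square finite-dimensional system on $V(K)\times Q(K)\times V(K)\times Q(K)$, it suffices to show that the homogeneous local problem (zero face data and zero source) admits only the trivial solution. Here I would use that, with $\bar{u}_h=0$, the boundary penalty term in $d_h$ controls rigid-body motions so that $d_h(\cdot,\cdot)$ restricted to a single cell is coercive on $V(K)$ for $\eta$ large enough (a cell-local version of \cref{eq:stabdh}), together with the surjectivity $\nabla\cdot[\mathbb{P}_k(K)]^d \supseteq \mathbb{P}_{k-1}(K)$, which furnishes local inf-sup control of the divergence couplings in $b_h$; the negative reaction terms $-(c_0\cdot,\cdot)$ and $-\lambda^{-1}(\alpha\cdot-\cdot,\alpha\cdot-\cdot)$ do not obstruct invertibility, so that a standard Brezzi-type argument applies cell by cell.

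With the local solvers in hand, I would record three identities, abbreviating $l(\bar{x}_h) := (l_u(\bar{x}_h),l_{p_T}(\bar{x}_h),l_z(\bar{x}_h),l_p(\bar{x}_h))$. First, the local problem with $s=\tilde{s}=0$ gives $a_h((l(\bar{x}_h),\bar{x}_h),(z_h,0)) = 0$ for every cell function $z_h \in X_h$. Second, the local problem with zero face data and sources $f,g$ gives $a_h((x_h^f,0),(z_h,0)) = (f,v_h)_{\mathcal{T}_h} - (g,q_h)_{\mathcal{T}_h}$, where $v_h$ and $q_h$ are the displacement and fluid-pressure components of $z_h$. Third, a direct term-by-term comparison with \cref{eq:definitionah} shows that $\bar{a}_h(\bar{x}_h,\bar{y}_h)$ is precisely $a_h((l(\bar{x}_h),\bar{x}_h),(l(\bar{y}_h),\bar{y}_h))$, i.e. $a_h$ evaluated on the two conforming-local reconstructions. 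The crucial extra observation is that $a_h(\cdot,\cdot)$ is symmetric: $d_h$ is symmetric, the two $b_h$-couplings attached to $p_T$ and the two attached to $p$ are symmetric counterparts, and the remaining terms are symmetric reaction terms. Hence the local solvers coincide with their adjoints, and combining symmetry with the first identity yields $a_h((x_h^f,0),(l(\bar{y}_h),\bar{y}_h)) = a_h((l(\bar{y}_h),\bar{y}_h),(x_h^f,0)) = 0$.

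To finish, I take an arbitrary $\boldsymbol{y}_h = (y_h,\bar{y}_h) \in \boldsymbol{X}_h$ and split its cell part as $y_h = (y_h - l(\bar{y}_h)) + l(\bar{y}_h)$, so that $\boldsymbol{y}_h = (y_h - l(\bar{y}_h),0) + (l(\bar{y}_h),\bar{y}_h)$. Expanding $a_h(\boldsymbol{x}_h,\boldsymbol{y}_h)$ by bilinearity with $\boldsymbol{x}_h = (x_h^f + l(\bar{x}_h),\bar{x}_h)$, the cross terms are handled by the three identities: on the cell-only test $(y_h - l(\bar{y}_h),0)$ the $l(\bar{x}_h)$ contribution vanishes (first identity) while the $x_h^f$ contribution equals $(f,\cdot)_{\mathcal{T}_h}-(g,\cdot)_{\mathcal{T}_h}$ applied to the displacement/pressure components of $y_h - l(\bar{y}_h)$ (second identity); on the test $(l(\bar{y}_h),\bar{y}_h)$ the term $a_h((x_h^f,0),(l(\bar{y}_h),\bar{y}_h))$ vanishes by symmetry, and $a_h((l(\bar{x}_h),\bar{x}_h),(l(\bar{y}_h),\bar{y}_h)) = \bar{a}_h(\bar{x}_h,\bar{y}_h)$, which by the reduced equation equals $(f,l_u(\bar{y}_h))_{\mathcal{T}_h} - (g,l_p(\bar{y}_h))_{\mathcal{T}_h}$. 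Recombining the displacement/pressure components of $y_h - l(\bar{y}_h)$ with those of $l(\bar{y}_h)$ recovers exactly $(f,v_h)_{\mathcal{T}_h} - (g,q_h)_{\mathcal{T}_h}$, the right-hand side of \cref{eq:biotcomp}, completing the proof. As noted, the only step demanding real work is the local well-posedness; everything else is linear bookkeeping that mirrors the proof of \cite[Lemma 4]{rhebergen2018preconditioning}.
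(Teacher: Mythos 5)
Your proposal is correct and follows essentially the same route as the paper, which omits the proof by pointing to the identical argument for \cite[Lemma 4]{rhebergen2018preconditioning}: local solvability of \cref{eq:localproblembiot} (via cell-local coercivity of $d_h$, the surjectivity of $\nabla\cdot$ from $[\mathbb{P}_k(K)]^d$ onto $\mathbb{P}_{k-1}(K)$, and the sign structure of the reaction terms), followed by the identities $a_h((l(\bar{x}_h),\bar{x}_h),(y_h,0))=0$, $a_h((x_h^f,0),(y_h,0))=(f,v_h)_{\mathcal{T}_h}-(g,q_h)_{\mathcal{T}_h}$, the identification of $\bar{a}_h$ with $a_h$ on the reconstructed pairs, and the symmetry of $a_h$ to kill the remaining cross term. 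The splitting $\boldsymbol{y}_h=(y_h-l(\bar{y}_h),0)+(l(\bar{y}_h),\bar{y}_h)$ and the final bookkeeping are exactly the standard static-condensation argument the paper intends.
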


\subsubsection{A Pressure Auxiliary Problem and its Local Solvers}
\label{sss:localsolversaux}

In our analysis we also require the following reaction-diffusion
boundary value problem:
\begin{equation*}
  -\nabla \cdot (\kappa \nabla \tilde{p}) + (c_0 + \alpha^2 \lambda^{-1})\tilde{p} = \tilde{f} \text{ in } \Omega,
  \qquad
  \tilde{p} = 0 \text{ on } \partial \Omega,
\end{equation*}
where $\tilde{f} \in L^2(\Omega)$ is a given source term. The HDG
discretization of this reaction-diffusion problem is given by: Find
$\boldsymbol{p}_h \in \boldsymbol{Q}_h^0$ such that (see
\cite{wells2011analysis})
\begin{equation}
  \label{eq:HDGreacdif}
  \tilde{a}_h(\boldsymbol{p}_h, \boldsymbol{q}_h) = (\tilde{f}, q_h)_{\mathcal{T}_h} \quad \forall \boldsymbol{q}_h \in \boldsymbol{Q}_h^0,
\end{equation}
where
\begin{multline*}
  \tilde{a}_h(\boldsymbol{p}_h, \boldsymbol{q}_h)
  := \kappa(\nabla p_h, \nabla q_h)_{\mathcal{T}_h}
  - \kappa \langle \nabla p_h \cdot n, q_h - \bar{q}_h \rangle_{\partial\mathcal{T}_h}
  - \kappa \langle \nabla q_h \cdot n, p_h - \bar{p}_h \rangle_{\partial\mathcal{T}_h}
  \\
  + \kappa \eta \langle h_K^{-1}(p_h - \bar{p}_h), q_h - \bar{q}_h \rangle_{\partial\mathcal{T}_h}
  + (c_0 + \alpha^2 \lambda^{-1})(p_h, q_h)_{\mathcal{T}_h}.
\end{multline*}
From \cite[Lemmas 5.2 and 5.3]{wells2011analysis} we know there exist
positive uniform constants $\tilde{c}_1, \tilde{c}_2$ such that
\begin{equation}
  \label{eq:cobndatilde}
  \tilde{c}_1 \tnorm{\boldsymbol{q}_h}_q^2
  \le \tilde{a}_h(\boldsymbol{q}_h, \boldsymbol{q}_h)
  \le \tilde{c}_2 \tnorm{\boldsymbol{q}_h}_q^2
  \quad \forall \boldsymbol{q}_h \in \boldsymbol{Q}_h^0.
\end{equation}

As we did in \cref{sss:localsolversBiot} for the HDG discretization of
the Biot equations, we now define the HDG local solvers for the
reaction-diffusion problem. For this we directly follow
\cite[Definition 2]{henriquez2025parameter}. Given
$\bar{r}_h \in \bar{Q}_h^0$ and $\tilde{s} \in L^2(\Omega)$, we define
the function $\tilde{p}_h^L(\bar{r}_h,\tilde{s}) \in Q_h$ to be such
that its restriction to a cell $K \in \mathcal{T}_h$ satisfies the
local problem
\begin{equation}
  \label{eq:locprobreacdif}
  \tilde{a}_h^L(\tilde{p}_h^L,q_h) = (\tilde{s},q_h)_{\mathcal{T}_h} - \tilde{a}_h((0,\bar{r}_h),(q_h,0))|_K
  \qquad \forall q_h \in Q(K),
\end{equation}
where $\tilde{a}_h^L(p_h,q_h) := \tilde{a}_h((p_h,0),(q_h,0))|_K$. We
now have the following result to eliminate $p_h$ from
\cref{eq:HDGreacdif} (see \cite[Lemma 3]{henriquez2025parameter}).

\begin{lemma}
  \label{lem:tildel}
  Define $\tilde{p}_h^f := \tilde{p}_h^L(0,\tilde{f})$ for given
  $\tilde{f} \in L^2(\Omega)$. For all $\bar{q}_h \in \bar{Q}_h^0$
  define $\tilde{l}_p(\bar{q}_h) := \tilde{p}_h^L(\bar{q}_h,0)$. Let
  $\bar{p}_h \in \bar{Q}_h^0$ solve
  \begin{equation*}
    \tilde{a}_h((\tilde{l}_p(\bar{p}_h),\bar{p}_h),(\tilde{l}_p(\bar{q}_h),\bar{q}_h))
    = (\tilde{f},\tilde{l}_p(\bar{q}_h))_{\mathcal{T}_h}
    \qquad \forall \bar{q}_h \in \bar{Q}_h^0.
  \end{equation*}
  Let $p_h = \tilde{p}_h^f + \tilde{l}_p(\bar{p}_h)$. Then
  $(p_h,\bar{p}_h)$ solves \cref{eq:HDGreacdif}.
\end{lemma}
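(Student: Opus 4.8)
The plan is to verify directly that the pair $(p_h,\bar{p}_h)$ produced by the reduced problem satisfies the full HDG equation \cref{eq:HDGreacdif} against every test function in $\boldsymbol{Q}_h^0 = Q_h \times \bar{Q}_h^0$, exploiting the linearity of the local solver and the symmetry of $\tilde{a}_h$. First I would record that, since the local problem \cref{eq:locprobreacdif} is linear in its data $(\bar{r}_h,\tilde{s})$, the definitions give $p_h = \tilde{p}_h^f + \tilde{l}_p(\bar{p}_h) = \tilde{p}_h^L(\bar{p}_h,\tilde{f})$. The key structural observation is that any $\boldsymbol{q}_h = (q_h,\bar{q}_h) \in \boldsymbol{Q}_h^0$ decomposes as $(q_h,\bar{q}_h) = (q_h - \tilde{l}_p(\bar{q}_h),0) + (\tilde{l}_p(\bar{q}_h),\bar{q}_h)$, where both summands again lie in $\boldsymbol{Q}_h^0$. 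Hence it suffices to test \cref{eq:HDGreacdif} against the two families of ``cell'' functions $(\psi_h,0)$ with $\psi_h\in Q_h$ and ``lifted trace'' functions $(\tilde{l}_p(\bar{q}_h),\bar{q}_h)$ with $\bar{q}_h\in\bar{Q}_h^0$.

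For the cell functions I would appeal to the defining relation \cref{eq:locprobreacdif} of the local solver with data $(\bar{p}_h,\tilde{f})$. Using $\tilde{a}_h^L(p_h,\psi_h)=\tilde{a}_h((p_h,0),(\psi_h,0))|_K$ together with the bilinearity of $\tilde{a}_h$, the local equation rearranges cell-by-cell to $\tilde{a}_h((p_h,\bar{p}_h),(\psi_h,0))|_K = (\tilde{f},\psi_h)_K$; summing over $K\in\mathcal{T}_h$ yields $\tilde{a}_h((p_h,\bar{p}_h),(\psi_h,0)) = (\tilde{f},\psi_h)_{\mathcal{T}_h}$ for all $\psi_h\in Q_h$, which is precisely \cref{eq:HDGreacdif} restricted to these test functions.

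For the lifted trace functions I would split $p_h=\tilde{p}_h^f+\tilde{l}_p(\bar{p}_h)$ and treat the two contributions separately. The contribution $\tilde{a}_h((\tilde{l}_p(\bar{p}_h),\bar{p}_h),(\tilde{l}_p(\bar{q}_h),\bar{q}_h))$ equals $(\tilde{f},\tilde{l}_p(\bar{q}_h))_{\mathcal{T}_h}$ by the reduced equation satisfied by $\bar{p}_h$. It then remains to show that the cross term $\tilde{a}_h((\tilde{p}_h^f,0),(\tilde{l}_p(\bar{q}_h),\bar{q}_h))$ vanishes. I would first derive, directly from \cref{eq:locprobreacdif} with data $(\bar{q}_h,0)$, the Galerkin orthogonality $\tilde{a}_h((\tilde{l}_p(\bar{q}_h),\bar{q}_h),(\psi_h,0))=0$ for every $\psi_h\in Q_h$; that is, each lifted trace function is $\tilde{a}_h$-orthogonal to all cell functions. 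Since $\tilde{a}_h$ is symmetric, the cross term equals $\tilde{a}_h((\tilde{l}_p(\bar{q}_h),\bar{q}_h),(\tilde{p}_h^f,0))$, and taking $\psi_h=\tilde{p}_h^f\in Q_h$ in the orthogonality relation shows it is zero. Thus the full equation also holds against the lifted trace functions, and combining the two families over the decomposition above completes the verification.

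The step I expect to be the main obstacle --- and the only one requiring a genuine structural property rather than bookkeeping --- is the vanishing of the cross term, which hinges on both the symmetry of $\tilde{a}_h$ and the lifting orthogonality inherited from \cref{eq:locprobreacdif}. Everything else is a routine, if careful, manipulation of the local-solver definitions and their summation over the mesh, entirely parallel to the argument underlying \cref{lem:redvarform}.
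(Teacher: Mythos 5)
Your proof is correct and is essentially the standard static-condensation argument that the paper delegates to \cite[Lemma 3]{henriquez2025parameter}: decompose each test function in $\boldsymbol{Q}_h^0$ into a cell part and a lifted-trace part, use the local problem \cref{eq:locprobreacdif} for the former, and use the reduced equation together with the symmetry of $\tilde{a}_h$ and the Galerkin orthogonality $\tilde{a}_h((\tilde{l}_p(\bar{q}_h),\bar{q}_h),(\psi_h,0))=0$ for the latter. No gaps; this matches the intended proof.
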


\subsubsection{A Displacement Auxiliary Problem and its Local Solvers}
\label{sss:localsolversauxdisp}

We now consider the vector diffusion boundary value problem
\begin{equation*}
  - \nabla \cdot \mu \varepsilon(u) = f \text{ in } \Omega,
  \qquad
  u = 0 \text{ on } \partial \Omega,
\end{equation*}
where $f \in L^2(\Omega)^d$ is a given source term. The HDG
discretization of this vector diffusion problem is given by: Find
$\boldsymbol{u}_h \in \boldsymbol{V}_h$ such that
\begin{equation}
  \label{eq:HDGvecdif}
  d_h(\boldsymbol{u}_h, \boldsymbol{v}_h) = (f, q_h)_{\mathcal{T}_h} \quad \forall \boldsymbol{v}_h \in \boldsymbol{V}_h.
\end{equation}

Identical to the pressure auxiliary problem in
\cref{sss:localsolversaux}, given $\bar{r}_h \in \bar{V}_h$ and
$s \in L^2(\Omega)^d$, we define the function
$\tilde{u}_h^L(\bar{r}_h,s) \in V_h$ to be such that its restriction
to a cell $K \in \mathcal{T}_h$ satisfies the local problem
\begin{equation}
  \label{eq:locprobvecdif}
  d_h^L(\tilde{u}_h^L,v_h) = (s,v_h)_{\mathcal{T}_h} - d_h((0,\bar{r}_h),(v_h,0))|_K
  \qquad \forall v_h \in V(K),
\end{equation}
where $d_h^L(u_h,v_h) := d_h((u_h,0),(v_h,0))|_K$. Eliminating $u_h$
from \cref{eq:HDGvecdif} results in the following lemma.

\begin{lemma}
  \label{lem:tildelv}
  Define $\tilde{u}_h^f := \tilde{u}_h^L(0,f)$ for given
  $f \in L^2(\Omega)^d$. For all $\bar{v}_h \in \bar{V}_h$
  define $\tilde{l}_u(\bar{v}_h) := \tilde{u}_h^L(\bar{v}_h,0)$. Let
  $\bar{u}_h \in \bar{V}_h$ solve
  \begin{equation*}
    d_h((\tilde{l}_u(\bar{u}_h),\bar{u}_h),(\tilde{l}_u(\bar{v}_h),\bar{v}_h))
    = (f,\tilde{l}_u(\bar{v}_h))_{\mathcal{T}_h}
    \qquad \forall \bar{v}_h \in \bar{V}_h.
  \end{equation*}
  Let $u_h = \tilde{u}_h^f + \tilde{l}_u(\bar{u}_h)$. Then
  $(u_h,\bar{u}_h)$ solves \cref{eq:HDGvecdif}.
\end{lemma}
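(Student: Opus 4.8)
The plan is to follow the standard static condensation argument, which here rests on two structural facts: the linearity of the local solver \cref{eq:locprobvecdif} in its data $(\bar{r}_h,s)$, and the symmetry of the bilinear form $d_h$. Since $u_h = \tilde{u}_h^f + \tilde{l}_u(\bar{u}_h) = \tilde{u}_h^L(0,f) + \tilde{u}_h^L(\bar{u}_h,0)$, linearity of \cref{eq:locprobvecdif} gives at once that, cell by cell, $(u_h,\bar{u}_h)$ satisfies $d_h((u_h,\bar{u}_h),(v_h,0))|_K = (f,v_h)_K$ for all $v_h \in V(K)$. Summing over $K \in \mathcal{T}_h$ shows that \cref{eq:HDGvecdif} holds whenever the test function is purely cell-based, that is, of the form $(v_h,0)$ with $v_h \in V_h$. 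This handles the cell equations.

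It then remains to verify the face equations $d_h((u_h,\bar{u}_h),(0,\bar{v}_h)) = 0$ for all $\bar{v}_h \in \bar{V}_h$; combined with the cell equations and the bilinearity of $d_h$ in its second argument, this yields \cref{eq:HDGvecdif} for arbitrary $(v_h,\bar{v}_h) \in \boldsymbol{V}_h$. I would split $(u_h,\bar{u}_h) = (\tilde{u}_h^f,0) + (\tilde{l}_u(\bar{u}_h),\bar{u}_h)$ and treat the two contributions separately. For the trace part, I would insert the decomposition $(\tilde{l}_u(\bar{v}_h),\bar{v}_h) = (\tilde{l}_u(\bar{v}_h),0) + (0,\bar{v}_h)$ into the reduced variational problem that $\bar{u}_h$ solves. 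Since $\tilde{l}_u(\bar{u}_h)$ is a local solver with zero source, its cell-wise defining relation summed over $\mathcal{T}_h$ gives $d_h((\tilde{l}_u(\bar{u}_h),\bar{u}_h),(w_h,0)) = 0$ for every $w_h \in V_h$, and in particular for $w_h = \tilde{l}_u(\bar{v}_h)$. Hence the reduced equation collapses to $d_h((\tilde{l}_u(\bar{u}_h),\bar{u}_h),(0,\bar{v}_h)) = (f,\tilde{l}_u(\bar{v}_h))_{\mathcal{T}_h}$.

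The main obstacle is the source part $d_h((\tilde{u}_h^f,0),(0,\bar{v}_h))$, because the local solver controls only the cell equations and says nothing directly about face values; this is precisely where I would exploit the symmetry of $d_h$. Applying the defining relation of $\tilde{l}_u(\bar{v}_h)$ with the admissible cell test function $\tilde{u}_h^f \in V_h$ gives $d_h((\tilde{l}_u(\bar{v}_h),\bar{v}_h),(\tilde{u}_h^f,0)) = 0$, so by bilinearity $d_h((0,\bar{v}_h),(\tilde{u}_h^f,0)) = -d_h((\tilde{l}_u(\bar{v}_h),0),(\tilde{u}_h^f,0))$. Using symmetry of $d_h$ and then the defining relation of $\tilde{u}_h^f = \tilde{u}_h^L(0,f)$ tested against $\tilde{l}_u(\bar{v}_h)$, the right-hand side equals $-(f,\tilde{l}_u(\bar{v}_h))_{\mathcal{T}_h}$, so symmetry once more yields $d_h((\tilde{u}_h^f,0),(0,\bar{v}_h)) = -(f,\tilde{l}_u(\bar{v}_h))_{\mathcal{T}_h}$. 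Adding this to the trace-part identity, the two $(f,\tilde{l}_u(\bar{v}_h))_{\mathcal{T}_h}$ contributions cancel and I obtain $d_h((u_h,\bar{u}_h),(0,\bar{v}_h)) = 0$. The only nonroutine point is this repeated use of symmetry to transfer the source-term information between the cell and face equations; the remainder is bilinearity bookkeeping, and the argument is the exact analogue of the one behind \cref{lem:tildel}.
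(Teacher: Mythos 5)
Your argument is correct and is precisely the standard static condensation argument that the paper invokes (the paper omits the proof, deferring to the analogous results in \cite{henriquez2025parameter} and \cite{rhebergen2018preconditioning}): cell equations from the local solvers by linearity, face equations by splitting off the source part and using the symmetry of $d_h$ together with the local problems for $\tilde{u}_h^f$ and $\tilde{l}_u(\bar{v}_h)$ to show the two $(f,\tilde{l}_u(\bar{v}_h))_{\mathcal{T}_h}$ contributions cancel. No gaps; this is the intended proof.
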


\subsection{Various Operators}
\label{ss:ops}

We require various operators. Following the notation in
\cite{henriquez2025parameter}, we first define the operators
$D : \boldsymbol{V}_h \to \boldsymbol{V}_h^{\ast}$ and
$\tilde{A} : \boldsymbol{Q}_h^0 \to \boldsymbol{Q}_h^{0,\ast}$ to be
such that
\begin{align*}
  \langle D \boldsymbol{u}_h, \boldsymbol{v}_h \rangle_{\boldsymbol{V}_h^{\ast},\boldsymbol{V}_h}
  &= d_h(\boldsymbol{u}_h, \boldsymbol{v}_h)
  && \forall \boldsymbol{u}_h,\boldsymbol{v}_h \in \boldsymbol{V}_h,
  \\
  \langle \tilde{A} \boldsymbol{p}_h, \boldsymbol{q}_h \rangle_{\boldsymbol{Q}_h^{0,\ast},\boldsymbol{Q}_h^0}
  &= \tilde{a}_h(\boldsymbol{p}_h, \boldsymbol{q}_h)
  && \forall \boldsymbol{p}_h,\boldsymbol{q}_h \in \boldsymbol{Q}_h^0.
\end{align*}
Using that $\boldsymbol{V}_h := V_h \times \bar{V}_h$ and
$\boldsymbol{Q}_h^0 := Q_h \times \bar{Q}_h^0$, we may write $D$ and
$\tilde{A}$ in block form:
\begin{align*}
  D &=
  \begin{bmatrix}
    D_{11} & D_{21}^T \\
    D_{21} & D_{22}
  \end{bmatrix},
  &&
  \tilde{A} =
  \begin{bmatrix}
    \tilde{A}_{11} & \tilde{A}_{21}^T \\
    \tilde{A}_{21} & \tilde{A}_{22}
  \end{bmatrix},  
\end{align*}
where $D_{11} : V_h \to V_h^{\ast}$,
$D_{21}: V_h \to \bar{V}_h^{\ast}$, and
$D_{22} : \bar{V}_h \to \bar{V}_h^{\ast}$, and where
$\tilde{A}_{11} : Q_h \to Q_h^{\ast}$,
$\tilde{A}_{21}: Q_h \to \bar{Q}_h^{0,\ast}$, and
$\tilde{A}_{22} : \bar{Q}_h^0 \to \bar{Q}_h^{0,\ast}$.  The Schur
complements $S_D : \bar{V}_h \to \bar{V}_h^{\ast}$ and
$S_{\tilde{A}} : \bar{Q}_h^0 \to \bar{Q}_h^{0,\ast}$ of $D$ and
$\tilde{A}$ are defined as $S_D = D_{22} - D_{21}D_{11}^{-1}D_{21}^T$
and
$S_{\tilde{A}} = \tilde{A}_{22} -
\tilde{A}_{21}\tilde{A}_{11}^{-1}\tilde{A}_{21}^T$, respectively. From
\cite[eqns (29) and (44)]{henriquez2025parameter} we have the
following inequalities:
\begin{subequations}
  \begin{align}
    \label{eq:SAtineq}
    \langle \tilde{A}\boldsymbol{q}_h, \boldsymbol{q}_h \rangle_{\boldsymbol{Q}_h^{0,\ast},\boldsymbol{Q}_h^0}
    &\ge \langle S_{\tilde{A}} \bar{q}_h, \bar{q}_h \rangle_{\bar{Q}_h^{0,\ast},\bar{Q}_h^0}
    && \forall \boldsymbol{q}_h \in \boldsymbol{Q}_h^0,
    \\
    \label{eq:SDineq}
    \langle D\boldsymbol{v}_h, \boldsymbol{v}_h \rangle_{\boldsymbol{V}_h^{\ast},\boldsymbol{V}_h}
    &\ge \langle S_D \bar{v}_h, \bar{v}_h \rangle_{\bar{V}_h^{\ast},\bar{V}_h}
    && \forall \boldsymbol{v}_h \in \boldsymbol{V}_h.
  \end{align}
\end{subequations}

Next, we define the operators
$P^u:\boldsymbol{V}_h \to \boldsymbol{V}_h^{\ast}$,
$P^{p_T} : \boldsymbol{Q}_h \to \boldsymbol{Q}_h^{\ast}$,
$P^z : V_h \to V_h^{\ast}$, and
$P^{p} : \boldsymbol{Q}_h^0 \to \boldsymbol{Q}_h^{0,\ast}$ such that
\begin{align*}
  \langle P\boldsymbol{x}_h, \boldsymbol{y}_h \rangle_{\boldsymbol{X}_h^*,\boldsymbol{X}_h}
  =& (\boldsymbol{u}_h,\boldsymbol{v}_h)_{v} + (\boldsymbol{p}_{Th},\boldsymbol{q}_{Th})_{q_T}
     + (z_h,w_h)_{w} + (\boldsymbol{p}_h, \boldsymbol{q}_h)_{q}
  \\
  =& \langle P^u\boldsymbol{u}_h,\boldsymbol{v}_h \rangle_{\boldsymbol{V}_h^{\ast},\boldsymbol{V}_h}
     + \langle P^{p_T}\boldsymbol{p}_{Th},\boldsymbol{q}_{Th} \rangle_{\boldsymbol{Q}_h^{\ast},\boldsymbol{Q}_h}
     + \langle P^zz_h, w_h \rangle_{V_h^{\ast},V_h}
     + \langle P^p\boldsymbol{p}_h, \boldsymbol{q}_h \rangle_{\boldsymbol{Q}_h^{0,\ast},\boldsymbol{Q}_h^0},
\end{align*}
and note that $P$ can be written in block form as:
\begin{equation*}
  P
  =
  \begin{bmatrix}
    P^u & 0 & 0 & 0
    \\
    0 & P^{p_T} & 0 & 0
    \\
    0 & 0 & P^z & 0
    \\
    0 & 0 & 0 & P^p
  \end{bmatrix}
  =
  \begin{bmatrix}
    P_{11}^u & (P_{21}^u)^T & 0 & 0 & 0 & 0 & 0
    \\
    P_{21}^u & P_{22}^u & 0 & 0 & 0 & 0 & 0
    \\
    0 & 0 & P_{11}^{p_T} & 0 & 0 & 0 & 0
    \\
    0 & 0 & 0 & P_{22}^{p_T} & 0 & 0 & 0
    \\
    0 & 0 & 0 & 0 & P^z & 0 & 0
    \\
    0 & 0 & 0 & 0 & 0 & P_{11}^p & (P_{21}^p)^T
    \\
    0 & 0 & 0 & 0 & 0 & P_{21}^p & P_{22}^p
  \end{bmatrix},
\end{equation*}
where $P_{11}^u : V_h \to V_h^{\ast}$,
$P_{21}^u : V_h \to \bar{V}_h^{\ast}$,
$P_{22}^u : \bar{V}_h \to \bar{V}_h^{\ast}$,
$P_{11}^{p_T} : Q_h \to Q_h^{\ast}$,
$P_{22}^{p_T} : \bar{Q}_h \to \bar{Q}_h^{\ast}$,
$P^z : V_h \to V_h^{\ast}$, $P_{11}^p : Q_h \to Q_h^{\ast}$,
$P_{21}^p : Q_h \to \bar{Q}_h^{0,\ast}$, and
$P_{22}^p : \bar{Q}_h \to \bar{Q}_h^{0,\ast}$. The Schur complement
$S_P : (\bar{V}_h \times \bar{Q}_h \times \bar{Q}_h^0) \to
(\bar{V}_h^{\ast} \times \bar{Q}_h^{\ast} \times \bar{Q}_h^{0\ast})$
is defined as:
\begin{equation*}
  S_P =
  \begin{bmatrix}
    S_{P^u} & 0 & 0
    \\
    0 & S_{P^{p_T}} & 0
    \\
    0 & 0 & S_{P^p}
  \end{bmatrix},
\end{equation*}
where $S_{P^u} := P_{22}^u - P_{21}^u(P_{11}^u)^{-1}(P_{21}^u)^T$,
$S_{P^{p_T}} := P_{22}^{p_T}$, and
$S_{P^p} := P_{22}^p - P_{21}^p(P_{11}^p)^{-1}(P_{21}^p)^T$. We recall
from \cite[eqns (31) and (46)]{henriquez2025parameter} that for all
$\boldsymbol{v}_h \in \boldsymbol{V}_h$ and
$\boldsymbol{q}_h \in \boldsymbol{Q}_h^0$,
\begin{subequations}
  \begin{align}
    \label{eq:Puequality}
    \langle P^u\boldsymbol{v}_h, \boldsymbol{v}_h \rangle_{\boldsymbol{V}_h^{\ast},\boldsymbol{V}_h}
    =& \langle P_{11}^u(v_h + (P_{11}^u)^{-1}(P_{21}^u)^T\bar{v}_h), v_h + (P_{11}^u)^{-1}(P_{21}^u)^T\bar{v}_h \rangle_{V_h^{\ast},V_h}
     + \langle S_{P^u}\bar{v}_h, \bar{v}_h \rangle_{\bar{V}_h^{\ast},\bar{V}_h},
    \\
    \label{eq:Ppequality}
    \langle P^p\boldsymbol{q}_h, \boldsymbol{q}_h \rangle_{\boldsymbol{Q}_h^{0,\ast},\boldsymbol{Q}_h^0}
    =& \langle P_{11}^p(q_h + (P_{11}^p)^{-1}(P_{21}^p)^T\bar{q}_h), q_h + (P_{11}^p)^{-1}(P_{21}^p)^T\bar{q}_h \rangle_{Q_h^{\ast},Q_h}
     + \langle S_{P^p}\bar{q}_h, \bar{q}_h \rangle_{\bar{Q}_h^{0,\ast},\bar{Q}_h^0}.
  \end{align}  
\end{subequations}

\subsection{$S_P$ is a Parameter-Robust Preconditioner}
\label{ss:cpproof}

To prove that $S_P$ is a parameter-robust preconditioner for
\cref{eq:SAbiot} we need to prove the existence of a uniform constant
$c_p$ such that \cref{eq:thm23a} holds. To prove this we require the
following result which is proven in \cref{ap:mainresultforcp}.

\begin{lemma}
  \label{lem:mainresultforcp}
  For any
  $\bar{y}_h := (\bar{v}_h,\bar{q}_{Th},\bar{q}_h) \in \bar{X}_h$ let
  $l_u(\bar{y}_h)$, $l_{p_T}(\bar{y}_h)$, $l_z(\bar{z}_h)$, and
  $l_p(\bar{y}_h)$ be as defined in \cref{lem:redvarform}, and
  $\tilde{l}_p(\bar{q}_h)$ and $\tilde{l}_u(\bar{v}_h)$ be as defined
  in \cref{lem:tildel,lem:tildelv}, respectively. There exists a
  uniform constant $c_x$ such that
  \begin{multline}
    \label{eq:mainresultforcp}
    \tnorm{(l_u(\bar{y}_h),\bar{v}_h,l_{p_T}(\bar{y}_h),\bar{q}_{Th},l_z(\bar{y}_h),l_p(\bar{y}_h),\bar{q}_h)}_{\boldsymbol{X}_h}^2
    \\
    \le c_x \eta \del[1]{
       \eta d_h((\tilde{l}_u(\bar{v}_h),\bar{v}_h), (\tilde{l}_u(\bar{v}_h),\bar{v}_h))  
       + \mu^{-1} \eta^{-1} \norm[0]{h_K^{1/2}\bar{q}_{Th}}_{\partial\mathcal{T}_h}^2
       + \tilde{a}_h((\tilde{l}_p(\bar{q}_h),\bar{q}_h),(\tilde{l}_p(\bar{q}_h),\bar{q}_h)) }.
  \end{multline}
\end{lemma}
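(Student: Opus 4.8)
The starting point is that the functions assembled from the Biot local solver of \cref{lem:redvarform} are discretely harmonic. Writing (and suppressing the argument $\bar{y}_h$) $\boldsymbol{u}_h := (l_u,\bar{v}_h)$, $\boldsymbol{p}_{Th} := (l_{p_T},\bar{q}_{Th})$, $z_h := l_z$, and $\boldsymbol{p}_h := (l_p,\bar{q}_h)$, the defining local problem \cref{eq:localproblembiot} is equivalent to $a_h((\boldsymbol{u}_h,\boldsymbol{p}_{Th},z_h,\boldsymbol{p}_h),\boldsymbol{y}_h)=0$ for every $\boldsymbol{y}_h\in\boldsymbol{X}_h$ whose facet components vanish. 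Since the left-hand side of \cref{eq:mainresultforcp} is exactly $\tnorm{(\boldsymbol{u}_h,\boldsymbol{p}_{Th},z_h,\boldsymbol{p}_h)}_{\boldsymbol{X}_h}^2$, the task is to bound its four component norms by the three right-hand side quantities, which are the displacement Schur energy $d_h((\tilde{l}_u(\bar{v}_h),\bar{v}_h),(\tilde{l}_u(\bar{v}_h),\bar{v}_h))=\langle S_D\bar{v}_h,\bar{v}_h\rangle$, the total-pressure facet term $\mu^{-1}\eta^{-1}\norm[0]{h_K^{1/2}\bar{q}_{Th}}_{\partial\mathcal{T}_h}^2$, and the pressure Schur energy $\tilde{a}_h((\tilde{l}_p(\bar{q}_h),\bar{q}_h),(\tilde{l}_p(\bar{q}_h),\bar{q}_h))=\langle S_{\tilde{A}}\bar{q}_h,\bar{q}_h\rangle$.

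\textbf{Energy identity.} First I would extract an energy identity by testing harmonicity with the facet-free, sign-adjusted function $(l_u,0;-l_{p_T},0;l_z;-l_p,0)$. The two divergence pairings in \cref{eq:definitionah} cancel pairwise, and, crucially, the total-pressure and fluid-pressure zero-order contributions recombine, via the very sign pattern that renders $a_h$ coercive, into the single nonnegative term $\lambda^{-1}\norm[0]{\alpha l_p-l_{p_T}}_{\mathcal{T}_h}^2$, leaving
\begin{equation*}
  d_h(\boldsymbol{u}_h,(l_u,0))+\kappa^{-1}\norm[0]{l_z}_{\mathcal{T}_h}^2+c_0\norm[0]{l_p}_{\mathcal{T}_h}^2+\lambda^{-1}\norm[0]{\alpha l_p-l_{p_T}}_{\mathcal{T}_h}^2=-\langle\bar{q}_{Th},l_u\cdot n\rangle_{\partial\mathcal{T}_h}-\langle\bar{q}_h,l_z\cdot n\rangle_{\partial\mathcal{T}_h}.
\end{equation*}
Setting $\tilde{\boldsymbol{u}}_h:=(\tilde{l}_u(\bar{v}_h),\bar{v}_h)$ and using its $d_h$-harmonicity together with the symmetry of $d_h$, I would rewrite $d_h(\boldsymbol{u}_h,(l_u,0))=d_h(\boldsymbol{u}_h,\boldsymbol{u}_h)-d_h(\tilde{\boldsymbol{u}}_h,\tilde{\boldsymbol{u}}_h)-b_h(\tilde{l}_u(\bar{v}_h),\boldsymbol{p}_{Th})$. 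By \cref{eq:stabdh} the leading term obeys $d_h(\boldsymbol{u}_h,\boldsymbol{u}_h)\ge c_d\tnorm{\boldsymbol{u}_h}_v^2$, while $d_h(\tilde{\boldsymbol{u}}_h,\tilde{\boldsymbol{u}}_h)=\langle S_D\bar{v}_h,\bar{v}_h\rangle$ moves to the right, so the displacement norm now carries a good sign.

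\textbf{Closing the estimate.} The remaining right-hand side terms are then bounded and absorbed. The first facet functional is recast as $\langle\bar{q}_{Th},l_u\cdot n\rangle=b_h(l_u,(0,\bar{q}_{Th}))$ and controlled through \cref{eq:bhboundedness1} by $c_1^b\tnorm{\boldsymbol{u}_h}_v(\mu^{-1}\eta^{-1}\norm[0]{h_K^{1/2}\bar{q}_{Th}}_{\partial\mathcal{T}_h}^2)^{1/2}$; pairing $b_h$ with the full $\boldsymbol{u}_h$ (which carries the correct facet value $\bar{v}_h$) is what avoids any uncontrolled bare facet norm of $\bar{v}_h$. The total pressure is recovered from the inf-sup \cref{eq:stabbhQh}, in its facet-free form, together with the harmonicity relation $b_h(v,\boldsymbol{p}_{Th})=-d_h(\boldsymbol{u}_h,(v,0))$ and boundedness \cref{eq:dhbound}, giving $\tnorm{\boldsymbol{p}_{Th}}_{q_T}\lesssim\tnorm{\boldsymbol{u}_h}_v$; the cross term $b_h(\tilde{l}_u(\bar{v}_h),\boldsymbol{p}_{Th})$ and this bound are then treated with Young's inequality with a small parameter, whose velocity part is absorbed into $c_d\tnorm{\boldsymbol{u}_h}_v^2$, breaking the displacement/total-pressure circularity. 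For the fluid pressure, the inf-sup \cref{eq:stabbhQh0}, which is admissible in harmonicity because the Darcy velocity carries no facet unknown, and the relation $\kappa^{-1}(z_h,w)_{\mathcal{T}_h}=-b_h(w,\boldsymbol{p}_h)$ yield $\kappa\tnorm{\boldsymbol{p}_h}_{1,p}^2\le c_2^{-2}\tnorm{z_h}_w^2$, so $\tnorm{\boldsymbol{p}_h}_q^2$ is controlled by $\tnorm{z_h}_w^2$, the zero-order terms above, and $\mu^{-1}\norm[0]{l_{p_T}}_{\mathcal{T}_h}^2$, using $\mu\lambda^{-1}\le c_l$ for the $\alpha^2\lambda^{-1}$ contribution.

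\textbf{The Darcy boundary term and the main obstacle.} The one term that resists elementary trace estimates is $\langle\bar{q}_h,l_z\cdot n\rangle$, because the bare facet norm of $\bar{q}_h$ is \emph{not} controlled by $\langle S_{\tilde{A}}\bar{q}_h,\bar{q}_h\rangle$. Here I would invoke the mixed-to-primal comparison of \cite{henriquez2025parameter}: the $z_h$–$p_h$ block of the Biot local solver is precisely the mixed HDG local solver of the Darcy/reaction–diffusion problem, and via \cref{eq:SAtineq,eq:Ppequality} and the norm equivalence \cref{eq:cobndatilde} for $\tilde{a}_h$ its energy, together with the functional $\langle\bar{q}_h,l_z\cdot n\rangle$, is bounded by $\langle S_{\tilde{A}}\bar{q}_h,\bar{q}_h\rangle$, the coupling $\lambda^{-1}\alpha l_{p_T}$ entering only as a lower-order source governed by $\mu\lambda^{-1}\le c_l$. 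Collecting the estimates and passing from $\langle S_D\bar{v}_h,\bar{v}_h\rangle$ to the displacement Schur energy through \cref{eq:SDineq} would give \cref{eq:mainresultforcp}. I expect the main obstacle to be exactly the displacement/total-pressure saddle-point coupling: since $b_h(v,\boldsymbol{q}_{Th})$ sees only the facet-free part of the velocity test function, the inf-sup must be applied in the facet-free norm $\tnorm{(v,0)}_v$, and the resulting $\eta$-loss, propagated through the Young absorption, is what forces the explicit factors of $\eta$ (and $\eta^2$ on the displacement term) on the right-hand side; keeping every constant uniform in $\mu$, $\lambda$, $\kappa$, $\alpha$, $c_0$ throughout this absorption is the delicate part.
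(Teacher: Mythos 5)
Your overall strategy differs from the paper's: you test the full local solution against a single sign-adjusted, facet-free test function to get one global energy identity, whereas the paper first splits the data by linearity into a cellwise-mean part, $\Psi^0|_K = l_\Psi(0,0,m_K(\bar{q}_h))$, and an oscillation part, $\Psi^1|_K = l_\Psi(\bar{v}_h,\bar{q}_{Th},\bar{q}_h - m_K(\bar{q}_h))$, and estimates each separately. Much of your sketch is sound: the energy identity, the treatment of $\langle \bar{q}_{Th}, l_u\cdot n\rangle$ via single-valuedness of $\bar{v}_h$, the recovery of $\boldsymbol{p}_{Th}$ and $\boldsymbol{p}_h$ from the inf-sup conditions \cref{eq:stabbhQh,eq:stabbhQh0}, and the Young-absorption into the coercive displacement term are all workable. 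But you have not actually proved the lemma, because the step you yourself flag as the main obstacle is left unresolved.

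The gap is the Darcy boundary term $\langle \bar{q}_h, l_z\cdot n\rangle_{\partial\mathcal{T}_h}$. You correctly observe that the bare facet norm $\kappa h_K^{-1}\norm[0]{\bar{q}_h}_{\partial K}^2$ is \emph{not} controlled by $\tilde{a}_h((\tilde{l}_p(\bar{q}_h),\bar{q}_h),(\tilde{l}_p(\bar{q}_h),\bar{q}_h))$ (take $\bar{q}_h$ locally constant with $c_0 = 0$ and $\lambda\to\infty$: the right-hand side degenerates while the facet norm does not). You then invoke a ``mixed-to-primal comparison'' via \cref{eq:SAtineq,eq:Ppequality,eq:cobndatilde}, but those inequalities concern the primal reaction--diffusion operator $\tilde{A}$ and the preconditioner $P$; none of them relates the mixed $(z,p)$ block of the Biot local solver to $S_{\tilde{A}}$, so the claimed bound does not follow from the cited material. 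The paper's resolution is precisely the mean/oscillation splitting you bypassed: for the oscillation part, $\langle \bar{q}_h - m_K(\bar{q}_h), z^1\cdot n\rangle$ is absorbed using $\kappa\tnorm{\bar{q}_h}_{h,p}^2 \le c\,\tilde{a}_h(\cdot,\cdot)$ \cref{eq:tildeah-estimate}; for the mean part, the constancy of $m_K(\bar{q}_h)$ on $\partial K$ gives $\langle m_K(\bar{q}_h), z^0\cdot n\rangle_{\partial K} = (m_K(\bar{q}_h), \nabla\cdot z^0)_K$, which combined with the local mass-conservation equation $\nabla\cdot z^0 = -c_0 p^0 - \lambda^{-1}\alpha(\alpha p^0 - p_T^0)$ and a trace inequality yields the weight $\min\{c_0+\alpha^2\lambda^{-1},\,h_K^{-2}\kappa\}\norm[0]{m_K(\bar{q}_h)}_K^2$, which \cref{eq:estimate-m_K} then bounds by the $\tilde{a}_h$ energy. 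Without this two-sided estimate on the mean, your argument cannot close in the degenerate parameter regimes, so the proof as proposed fails at exactly the point you identified.
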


We now prove our main result.

\begin{theorem}
  \label{eq:cpexists}
  There exists a uniform constant $c_p > 0$ such that \cref{eq:thm23a}
  holds.
\end{theorem}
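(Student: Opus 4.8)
The plan is to assemble the result directly from \cref{lem:mainresultforcp} together with the block structure of $S_P$; once that lemma is granted, this theorem is essentially an identification followed by three one-sided estimates. The first step is to recognize the left-hand side of \cref{eq:thm23a}. Writing \cref{eq:biotcompA} in the block form \cref{eq:blockA} with vanishing cell source, the cell unknowns generated by static condensation of a face datum $\bar{x}_h$ are exactly $x_h = -A_{11}^{-1}A_{21}^T\bar{x}_h$, and by the definitions in \cref{lem:redvarform} (the local solvers with zero source) this is $-A_{11}^{-1}A_{21}^T\bar{x}_h = (l_u(\bar{x}_h),l_{p_T}(\bar{x}_h),l_z(\bar{x}_h),l_p(\bar{x}_h))$. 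Hence, for $\bar{x}_h := (\bar{u}_h,\bar{p}_{Th},\bar{p}_h)$ the pair $(-A_{11}^{-1}A_{21}^T\bar{x}_h,\bar{x}_h)$ is precisely the argument on the left of \cref{eq:mainresultforcp} with $\bar{y}_h$ replaced by $\bar{x}_h$. Thus the square of the left-hand side of \cref{eq:thm23a} equals the left-hand side of \cref{eq:mainresultforcp}, and \cref{lem:mainresultforcp} bounds it by $c_x\eta$ times a sum of three control terms.

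It then remains to bound each control term by the corresponding diagonal block of $S_P$, recalling that $\norm[0]{\bar{x}_h}_{\bar{X}_h}^2 = \langle S_{P^u}\bar{u}_h,\bar{u}_h\rangle + \langle S_{P^{p_T}}\bar{p}_{Th},\bar{p}_{Th}\rangle + \langle S_{P^p}\bar{p}_h,\bar{p}_h\rangle$. The middle term is an exact identity: since $S_{P^{p_T}} = P_{22}^{p_T}$ and the inner product $(\cdot,\cdot)_{q_T}$ has no cell--face coupling, $\langle S_{P^{p_T}}\bar{p}_{Th},\bar{p}_{Th}\rangle = \mu^{-1}\eta^{-1}\norm[0]{h_K^{1/2}\bar{p}_{Th}}_{\partial\mathcal{T}_h}^2$. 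For the displacement term I would use that the local solver $\tilde{l}_u(\bar{u}_h)$ minimizes the symmetric, coercive form $d_h$ over the cell variable with trace $\bar{u}_h$ held fixed, which is the first-order condition encoded in \cref{eq:locprobvecdif}. Writing $v^P := -(P_{11}^u)^{-1}(P_{21}^u)^T\bar{u}_h$ for the $P^u$-harmonic extension, minimality, then the boundedness \cref{eq:dhbound}, then the identity \cref{eq:Puequality} (with cell part $v^P$, so its first term drops out) give
\begin{equation*}
  d_h((\tilde{l}_u(\bar{u}_h),\bar{u}_h),(\tilde{l}_u(\bar{u}_h),\bar{u}_h))
  \le d_h((v^P,\bar{u}_h),(v^P,\bar{u}_h))
  \le c_1\tnorm{(v^P,\bar{u}_h)}_v^2
  = c_1\langle S_{P^u}\bar{u}_h,\bar{u}_h\rangle.
\end{equation*}
The same argument applied to $\tilde{l}_p(\bar{p}_h)$, using the minimality characterization in \cref{eq:locprobreacdif}, the upper bound in \cref{eq:cobndatilde}, and the identity \cref{eq:Ppequality}, yields $\tilde{a}_h((\tilde{l}_p(\bar{p}_h),\bar{p}_h),(\tilde{l}_p(\bar{p}_h),\bar{p}_h)) \le \tilde{c}_2\langle S_{P^p}\bar{p}_h,\bar{p}_h\rangle$.

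Combining the three bounds, the right-hand side of \cref{eq:mainresultforcp} is at most $c_x\eta\max(\eta c_1,1,\tilde{c}_2)$ times $\norm[0]{\bar{x}_h}_{\bar{X}_h}^2$, so \cref{eq:thm23a} holds with $c_p = (c_x\eta\max(\eta c_1,1,\tilde{c}_2))^{1/2}$; this is a uniform constant because $c_x$, $c_1$, and $\tilde{c}_2$ are uniform and $\eta$ is a fixed method parameter (not a model parameter nor the mesh size). The genuine difficulty of the whole argument lives in \cref{lem:mainresultforcp}; the remaining obstacle here is conceptual rather than computational, namely seeing that the three terms on its right-hand side are exactly the $d_h$- and $\tilde{a}_h$-harmonic-extension energies of the auxiliary operators together with the face mass term, and that each is controlled by the diagonal block of $S_P$ defining $\norm[0]{\cdot}_{\bar{X}_h}$. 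The one point requiring care is the direction of the estimates: one must bound the $d_h$- and $\tilde{a}_h$-energies of the \emph{physical} local solvers from above by routing through the $P^u$- and $P^p$-harmonic extensions, for which the mixed cell--face contributions in \cref{eq:Puequality,eq:Ppequality} vanish, rather than the reverse, which would give an inequality in the wrong direction.
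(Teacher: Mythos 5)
Your proposal is correct and follows essentially the same route as the paper: identify the left-hand side of \cref{eq:thm23a} with the left-hand side of \cref{eq:mainresultforcp}, recognize the three control terms as the $S_D$-energy, the $S_{P^{p_T}}$ face mass term, and the $S_{\tilde{A}}$-energy, and compare each to the corresponding diagonal block of $S_P$ via the boundedness estimates \cref{eq:dhbound,eq:cobndatilde} and the identities \cref{eq:Puequality,eq:Ppequality} evaluated at the $P$-harmonic cell extension. Your "minimality of the local solver" step is exactly the inequalities \cref{eq:SDineq,eq:SAtineq} that the paper cites, so the two arguments coincide up to phrasing.
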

\begin{proof}
  The proof is similar to that of \cite[Theorem
  3]{henriquez2025parameter}. We can write
  \cref{eq:mainresultforcp} using operator notation as
  \begin{equation}
    \label{eq:mainresultforcp-operator}
    \tnorm{(-A_{11}^{-1}A_{21}^T\bar{y}_h, \bar{y}_h)}_{\boldsymbol{X}_h}^2
    \le c_x \eta \del[1]{ \eta \langle S_D\bar{v}_h, \bar{v}_h\rangle_{V_h^{\ast},V_h}
      + \mu^{-1} \eta^{-1} \norm[0]{h_K^{1/2}\bar{q}_{Th}}_{\partial \mathcal{T}_h}^2
      + \langle S_{\tilde{A}} \bar{q}_h, \bar{q}_h \rangle_{\bar{Q}_h^{0,\ast}, \bar{Q}_h^0} }.
  \end{equation}
  Consider the first term on the right hand side of
  \cref{eq:mainresultforcp-operator}. By \cref{eq:dhbound,eq:SDineq}
  we find
  \begin{equation*}
    \langle P^u\boldsymbol{v}_h, \boldsymbol{v}_h \rangle_{\boldsymbol{V}_h^{\ast},\boldsymbol{V}_h}
    =
    \tnorm{\boldsymbol{v}_h}_v^2
    \ge
    c_1^{-1}d_h(\boldsymbol{v}_h,\boldsymbol{v}_h)
    =
    c_1^{-1} \langle D\boldsymbol{v}_h, \boldsymbol{v}_h \rangle_{\boldsymbol{V}_h^{\ast},\boldsymbol{V}_h}
    \ge
    c_1^{-1} \langle S_D \bar{v}_h, \bar{v}_h \rangle_{\bar{V}_h^{\ast}, \bar{V}_h}
    \quad \forall \boldsymbol{v}_h \in \boldsymbol{V}_h.
  \end{equation*}
  Therefore, choosing $v_h = -(P_{11}^u)^{-1}(P_{21}^u)^T\bar{v}_h$
  and using \cref{eq:Puequality}, we find that
  \begin{equation}
    \label{eq:SpulowSD}
    \langle S_{P^u}\bar{v}_h,\bar{v}_h \rangle_{\bar{V}_h^{\ast},\bar{V}_h}
    \ge c_1^{-1}
    \langle S_D \bar{v}_h, \bar{v}_h \rangle_{\bar{V}_h^{\ast}, \bar{V}_h}
    \quad
    \forall \bar{v}_h \in \bar{V}_h.
  \end{equation}
  Similarly, for the third term on the right hand side of
  \cref{eq:mainresultforcp-operator}, but now using
  \cref{eq:cobndatilde,eq:SAtineq}, we find that
  \begin{equation}
    \label{eq:SpplowStA}
    \langle S_{P^p}\bar{q}_h, \bar{q}_h \rangle_{\bar{Q}_h^{0,\ast},\bar{Q}_h^0}
    \ge \tilde{c}_2^{-1} \langle S_{\tilde{A}} \bar{q}_h, \bar{q}_h \rangle_{\bar{Q}_h^{0,\ast}, \bar{Q}_h^0}
    \quad
    \forall \bar{q}_h \in \bar{Q}_h^0.
  \end{equation}
  For the second term on the right hand side of
  \cref{eq:mainresultforcp-operator} we note that
  \begin{equation}
    \label{eq:Spqterm}
    \mu^{-1} \eta^{-1} \norm[0]{h_K^{1/2}\bar{q}_{Th}}_{\partial \mathcal{T}_h}^2
    =
    \langle S_{P^{p_T}}\bar{q}_h, \bar{q}_h \rangle_{\bar{Q}_h^{\ast},\bar{Q}_h}
    \quad
    \forall \bar{q}_h \in \bar{Q}_h.
  \end{equation}
  Combining
  \cref{eq:mainresultforcp-operator,eq:SpulowSD,eq:SpplowStA,eq:Spqterm} and noting that 
  \begin{align*}
    \langle S_P\bar{y}_h, \bar{y}_h \rangle_{\bar{X}_h^{\ast},\bar{X}_h}
    =& \langle S_{P^u}\bar{v}_h,\bar{v}_h \rangle_{\bar{V}_h^{\ast},\bar{V}_h} + \langle S_{P^{p_T}}\bar{q}_{Th}, \bar{q}_{Th} \rangle_{\bar{Q}_h^{\ast},\bar{Q}_h} + \langle S_{P^p} \bar{q}_h, \bar{q}_h \rangle_{\bar{Q}_h^{0,\ast},\bar{Q}_h^0}
    \\
    =& (\bar{v}_h,\bar{v}_h)_{\bar{V}_h} + (\bar{q}_{Th},\bar{q}_{Th})_{\bar{Q}_h} + (\bar{q}_h,\bar{q}_h)_{\bar{Q}_h^0}
    \\
    =& \norm[0]{\bar{y}_h}_{\bar{X}_h},
  \end{align*}
  the result follows. 
\end{proof}

\Cref{eq:thm23a} and \cref{eq:cpexists} imply that $S_P$ is a
parameter-robust preconditioner for \cref{eq:SAbiot}. Other
parameter-robust preconditioners $S_{\widehat{P}}$ for
\cref{eq:SAbiot} can be found by replacing $P$ by a norm-equivalent
operator $\widehat{P}$ \cite[Remark 3]{henriquez2025parameter}. An
example is the operator
$\widehat{P}^{-1} : \boldsymbol{X}_h^{\ast} \to \boldsymbol{X}_h$
defined by
$\langle \widehat{P}\boldsymbol{x}_h, \boldsymbol{y}_h
\rangle_{\boldsymbol{X}_h^{\ast},\boldsymbol{X}_h} =
\widehat{a}_h(\boldsymbol{x}_h,\boldsymbol{y}_h)$ where
\begin{equation}
  \label{eq:inprodXh-hat}
  \widehat{a}_h(\boldsymbol{x}_h,\boldsymbol{y}_h)
  := d_h(\boldsymbol{u}_h,\boldsymbol{v}_h) + (\boldsymbol{p}_{Th},\boldsymbol{q}_{Th})_{q_T}
  + (z_h,w_h)_{w} + \tilde{a}_h(\boldsymbol{p}_h, \boldsymbol{q}_h).
\end{equation}
The operators $P$ and $\widehat{P}$ are norm-equivalent due to
\cref{eq:dhbound,eq:stabdh,eq:cobndatilde}.

\section{Numerical examples}
\label{s:numex}

In this section we use preconditioned MINRES (with a relative
preconditioned residual tolerance of $10^{-8}$ for two-dimensional
problems and $10^{-6}$ for three-dimensional problems) to solve the
HDG discretization of the Biot equations after static condensation. We
also apply and study preconditioned MINRES for an EDG-HDG
\cite{rhebergen2020embedded} discretization of the Biot equations. The
EDG-HDG discretization is obtained by replacing $\bar{V}_h$ in
\cref{eq:biotcomp} by $\bar{V}_h \cap C^0(\Gamma_0)$. As
preconditioners we consider $S_P$ and $S_{\widehat{P}}$, i.e., the
reduced preconditioners based on the inner products \cref{eq:inprodXh}
and \cref{eq:inprodXh-hat}, respectively. We consider both exact
preconditioners, in which blocks are inverted using a direct solver,
and inexact preconditioners. For the inexact preconditioners, we use
an auxiliary space preconditioner (ASP) with one forward and one
backward Gauss--Seidel smoothing step
\cite{fu2021uniform,fu2023uniform} in combination with Hypre's
BoomerAMG \cite{yang2002boomeramg} for the pressure block, while a
direct solver is used for the mass matrix corresponding to the total
pressure block. For the block corresponding to the displacement we
either use a balancing domain decomposition with constraints (BDDC)
method \cite{schoberl2013domain} (for the EDG-HDG discretization), or
ASP (for the HDG discretization). Furthermore, we consider
unstructured simplicial meshes generated by NETGEN
\cite{schoberl1997netgen}, unless specified otherwise, and the
(EDG-)HDG methods have been implemented in NGSolve
\cite{schoberl2014c++}. As penalty parameter we choose $\eta = 2dk^2$
and all simulations are performed with $k=2$.

\subsection{Test Case 1: Manufactured Solutions}
\label{ss:tc1}

In this first example we set the source terms and boundary conditions
such that the exact displacement and pressure solutions in 2d are
given by
\begin{equation*}
  u =
  \begin{bmatrix}
    \sin(\pi x)\sin(\pi y)  \\
    \sin(\pi x)\cos(\pi y)    
  \end{bmatrix}
  \qquad \text{and} \qquad
  p = \sin(\pi (x - y)),
\end{equation*}
and in 3d by
\begin{equation*}
  u =
  \begin{bmatrix}
    \sin(\pi x)\sin(\pi y)\sin(\pi z)  \\
    \sin(\pi x)\cos(\pi y)\sin(\pi z)  \\
    \sin(\pi x)\cos(\pi y)\cos(\pi z)    
  \end{bmatrix}
  \qquad \text{and} \qquad
  p = \sin(\pi (x - y - z)).
\end{equation*}
In \cref{tab:pre1vpre2} we demonstrate the robustness of the
preconditioners $S_P$ and $S_{\widehat{P}}$ with respect to the mesh
size $h$. For this we choose as parameters $\mu=1$, $\lambda = 10$,
$c_0 = 0.1$, $\kappa = 10^{-4}$, and $\alpha = 0.1$. From
\cref{tab:pre1vpre2} we observe that both preconditioners are
$h$-robust in two and three dimensions, both in their exact and
inexact forms. We further observe that the preconditioners perform
better for an HDG discretization in two dimensions and for an EDG-HDG
discretization in three dimensions. Finally, we observe that
$S_{\widehat{P}}$ always outperforms the preconditioner $S_P$. For
this reason, for the remaining experiments we will only show results
for the preconditioner $S_{\widehat{P}}$.

\begin{table}[tbp]
  \centering
  \resizebox{\textwidth}{!}{%
    \begin{tabular}{c|c|c|c|c|c||c|c|c|c|c}
      \hline
      \multicolumn{11}{c}{EDG-HDG}\\
      \hline
      & \multicolumn{5}{c||}{2d} & \multicolumn{5}{c}{3d} \\
      \hline
      Cells & 608 & 2382 & 9512 & 37938 & 151526 & 48 & 384 & 3072 & 24576 & 196608\\
      \hline
      $S_P$ & 108 (171) & 113 (169) & 108 (166) & 105 (164) & 107 (160) & 123 (125) & 134 (149) & 138 (158) & 134 (158) & 132 (158) \\
      \hline
      $S_{\widehat{P}}$ & 94 (163) & 98 (162) & 95 (157) & 92 (154) & 90 (150) & 105 (118) & 113 (140) & 113 (143) & 108 (142) & 105 (141) \\
      \hline
      \multicolumn{11}{c}{HDG}\\
      \hline
      & \multicolumn{5}{c||}{2d} & \multicolumn{5}{c}{3d} \\
      \hline
      Cells & 608 & 2382 & 9512 & 37938 & 151526 & 48 & 384 & 3072 & 24576 & 196608\\
      \hline
      $S_P$ & 102 (140) & 103 (139) & 98 (132) & 95 (133) & 94 (136) & 93 (101) & 105 (151) & 111 (170) & 111 (178) & 110 (181)\\
      \hline
      $S_{\widehat{P}}$ & 85 (124) & 85 (127) & 81 (123) & 80 (122) & 80 (125) & 82 (98) & 89 (149) & 93 (171) & 93 (178) & 93  (182) \\
      \hline
    \end{tabular}}
  \caption{$h$-robustness for the manufactured solutions test case of
    \cref{ss:tc1}. We list the number of iterations required for
    MINRES, preconditioned by preconditioners $S_P$ and
    $S_{\widehat{P}}$ in both their exact and inexact forms, to
    converge for different mesh sizes $h$. The results for the inexact
    forms of the preconditioners are shown in parentheses.}
  \label{tab:pre1vpre2}
\end{table}

In \cref{tab:paramet-robust} we demonstrate the robustness of the
preconditioner $S_{\widehat{P}}$ with respect to the model
parameters. For this, we vary $\kappa$, $\alpha$, $c_0$, and
$\lambda$, but fix $\mu = 0.5$. We consider a fixed mesh with 9512
simplices in two dimensions and 3072 simplices in three dimensions. We
indeed observe that the preconditioner is robust with respect to the
model parameters. We also again observe that $S_{\widehat{P}}$ performs
better for an HDG discretization in two dimensions and for an EDG-HDG
discretization in three dimensions.

\begin{table}[tbp]
  \centering
  \begin{tabular}{c|c|c||c|c|c|c|c|c}
    \hline
    \multicolumn{9}{c}{EDG-HDG}\\
    \hline
    \multicolumn{3}{c||}{} & \multicolumn{3}{c|}{2d} & \multicolumn{3}{c}{3d}\\
    \hline
    $\kappa$ & $\alpha$ & $c_0$ &$\lambda=10^0$ & $\lambda=10^4$ & $\lambda=10^8$ & $\lambda=10^0$ & $\lambda=10^4$ & $\lambda=10^8$\\
    \hline
    $\multirow{6}{*}{$10^0$}$ & $\multirow{3}{*}{$10^0$}$ & $10^0$ & 73 (142) & 112 (177) & 112 (172) & 82 (108) & 138 (165) & 136 (159)\\
    \cline{3-9}
                           & & $10^{-4}$ & 73 (142) & 112 (177) & 112 (172) & 82 (109) & 136 (165) & 136 (159)\\
    \cline{3-9}
                           & & $0$ & 73 (142) & 112 (177) & 112 (172) & 82 (109) & 136 (165) & 136 (159) \\
    \cline{2-9}
                           & $\multirow{3}{*}{$10^{-4}$}$  & $10^0$ & 71 (143) & 112 (177) & 112 (172) & 80 (108) & 138 (165) & 136 (159) \\
    \cline{3-9}
                           &  & $10^{-4}$ & 72 (142) & 112 (177) & 112 (172) & 80 (108) & 136 (165) & 136 (159) \\
    \cline{3-9}
                           & & $0$ & 72 (143) & 112 (177) & 112 (172) & 80 (108) & 136 (165) & 136 (159) \\
    \cline{1-9}
    $\multirow{6}{*}{$10^{-4}$}$ & $\multirow{3}{*}{$10^0$}$ & $10^0$ & 78 (132) & 112 (177) & 112 (171) & 89 (95) & 136 (159) & 136 (153)\\
    \cline{3-9}
                           & & $10^{-4}$ & 87 (136) & 112 (177) & 112 (172) & 96 (102) & 138 (165) & 138 (160)\\
    \cline{3-9}
                           & & $0$ & 87 (136) & 112 (177) & 112 (172) & 96 (102) & 138 (165) & 136 (160) \\
    \cline{2-9}
                           & $\multirow{3}{*}{$10^{-4}$}$  & $10^0$ & 72 (130) & 112 (176) & 112 (172) & 80 (93) & 136 (159) & 136 (153) \\
    \cline{3-9}
                           &  & $10^{-4}$ & 72 (129) & 112 (174) & 112 (169) & 79 (95) & 138 (163) & 136 (157) \\
    \cline{3-9}
                           & & $0$ & 72 (130) & 112 (174) & 112 (169) & 79 (95) & 136 (162) & 136 (157) \\
    \cline{1-9}
    $\multirow{6}{*}{$10^{-8}$}$ & $\multirow{3}{*}{$10^0$}$ & $10^0$ & 78 (119) & 112 (180) & 112 (154) & 91 (91) & 136 (136) & 136 (136)\\
    \cline{3-9}
                           & & $10^{-4}$ & 102 (141) & 112 (182) & 112 (174) & 113 (113) & 136 (154) & 136 (157)\\
    \cline{3-9}
                           & & $0$ & 102 (141) & 112 (180) & 112 (174) & 113 (113) & 136 (161) & 138 (163)\\
    \cline{2-9}
                           & $\multirow{3}{*}{$10^{-4}$}$  & $10^0$ & 70 (116) & 112 (154) & 112 (154) & 79 (79) & 136 (136) & 136 (136)\\
    \cline{3-9}
                           &  & $10^{-4}$ & 70 (127) & 112 (174) & 112 (169) & 79 (91) & 136 (157) & 136 (151) \\
    \cline{3-9}
                           & & $0$ & 72 (128) & 112 (174) & 112 (169) & 80 (95) & 136 (163) & 136 (157) \\
    \cline{1-9}
    \hline
    \multicolumn{9}{c}{HDG}\\
    \hline
    \multicolumn{3}{c||}{} & \multicolumn{3}{c|}{2d} & \multicolumn{3}{c}{3d}\\
    \hline
    $\kappa$ & $\alpha$ & $c_0$ &$\lambda=10^0$ & $\lambda=10^4$ & $\lambda=10^8$ & $\lambda=10^0$ & $\lambda=10^4$ & $\lambda=10^8$\\
    \hline
    $\multirow{6}{*}{$10^0$}$ & $\multirow{3}{*}{$10^0$}$ & $10^0$ & 74 (129) & 92 (145) & 92 (145) & 90 (222) & 112 (187) & 112 (181)\\
    \cline{3-9}
                           & & $10^{-4}$ & 74 (129) & 92 (145) & 92 (145) & 90 (222) & 112 (187) & 112 (182)\\
    \cline{3-9}
                           & & $0$ & 74 (129) & 92 (145) & 92 (145) & 90 (222) & 112 (187) & 112 (182) \\
    \cline{2-9}
                           & $\multirow{3}{*}{$10^{-4}$}$  & $10^0$ & 74 (129) & 92 (145) & 92 (145) & 89 (222) & 112 (187) & 112 (182) \\
    \cline{3-9}
                           &  & $10^{-4}$ & 74 (129) & 92 (145) & 92 (145) & 89 (222) & 112 (187) & 112 (182) \\
    \cline{3-9}
                           & & $0$ & 74 (129) & 92 (145) & 92 (145) & 89 (222) & 112 (187) & 112 (182) \\
    \cline{1-9}
    $\multirow{6}{*}{$10^{-4}$}$ & $\multirow{3}{*}{$10^0$}$ & $10^0$ & 77 (124) & 92 (145) & 92 (145) & 90 (161) & 112 (180) & 112 (175) \\
    \cline{3-9}
                           & & $10^{-4}$ & 82 (129) & 92 (145) & 92 (145) & 99 (174) & 112 (180) & 112 (182) \\
    \cline{3-9}
                           & & $0$ & 82 (129) & 92 (145) & 92 (145) & 99 (174) & 112 (187) & 112 (182) \\
    \cline{2-9}
                           & $\multirow{3}{*}{$10^{-4}$}$  & $10^0$ & 67 (113) & 92 (145) & 92  (145) & 80 (160) & 112 (180) & 112 (174) \\
    \cline{3-9}
                           &  & $10^{-4}$ & 65 (109) & 92 (145) & 92 (145) & 73 (164) & 112 (184) & 112 (179) \\
    \cline{3-9}
                           & & $0$ & 65 (109) & 92 (145) & 92 (145) & 73 (164) & 112 (184) & 112 (179) \\
    \cline{1-9}
    $\multirow{6}{*}{$10^{-8}$}$ & $\multirow{3}{*}{$10^0$}$ & $10^0$ & 79 (124) & 92 (145) & 92 (145) & 81 (131) & 112 (162) & 112 (162) \\
    \cline{3-9}
                           & & $10^{-4}$ & 102 (151) & 92 (145) & 92 (145) & 103 (155) & 113 (175) & 112 (177) \\
    \cline{3-9}
                           & & $0$ & 102 (151) & 92 (145) & 92 (145) & 103 (155) & 113 (182) & 112 (184)\\
    \cline{2-9}
                           & $\multirow{3}{*}{$10^{-4}$}$  & $10^0$ & 69 (115) & 92 (145) & 92  (145)& 71 (127) & 112 (162) & 112 (162) \\
    \cline{3-9}
                           &  & $10^{-4}$ & 59 (100) & 92 (145) & 92 (145) & 63 (126) & 112 (177) & 112 (172) \\
    \cline{3-9}
                           & & $0$ & 60 (100) & 92 (145) & 92 (145)  & 71 (151) & 112 (184) & 112 (179) \\
    \cline{1-9}
  \end{tabular}
  \caption{parameter-robustness for the manufactured solutions test
    case of \cref{ss:tc1}. We list the number of iterations required
    for MINRES, preconditioned by $S_{\widehat{P}}$ in both its exact
    and inexact form, to converge for different parameter values. The
    results for the inexact form of $S_{\widehat{P}}$ is shown in
    parentheses.}
  \label{tab:paramet-robust}
\end{table}

\subsection{Test Case 2: The Footing Problem}
\label{ss:tc2}

We now consider the 2d \cite{gaspar2008stabilized,oyarzua2016locking}
and 3d \cite{boon2021robust,gaspar2008distributive} footing
problems. These are benchmark problems to test the locking-free
properties of numerical methods for Biot's consolidation problem. We
consider the time-dependent problem in which we use backward Euler
time-stepping. We fix the model parameters, but vary the time step
$\tau$. We have the following problem setup:
\begin{itemize}
\item[$\bullet$] 2d setup. The domain is given by
  $\Omega = (-50,50) \times (0,75)$ with boundaries
  $\Gamma_1 = \{(x,y) \in \partial \Omega, |x| \leq 50/3, y = 75\}$,
  $\Gamma_2 = \{(x,y) \in \partial \Omega, |x| > 50/3, y = 75\}$, and
  $\Gamma_3 = \partial \Omega \backslash(\Gamma_1 \cup \Gamma_2)$. We
  consider a final time of $T=50$ and compute the solution on a mesh
  consisting of 48345 cells. We choose the following model parameters:
  $\tilde{\kappa} = 10^{-4}$, $c_0 = 10^{-3}$, $\alpha = 0.1$,
  $E = 3 \times 10^4 $, and $\nu = 0.4995$. Here we remark that $E$ is
  Young's modulus of elasticity and $\nu$ is Poisson's ratio. In the
  case of plane strain, these are related to the Lam\'e constants
  through $\lambda = E\nu / ((1+\nu)(1-2\nu))$ and
  $\tilde{\mu} = E/(2(1+\nu))$.
\item[$\bullet$] 3d setup. The domain is given by
  $\Omega = (-32,32) \times (-32,32) \times (0,64)$ with boundaries
  $\Gamma_1 = \{(x,y,z) \in \partial \Omega, |x| \leq 16, |y| \leq 16,
  z = 64\}$,
  $\Gamma_2 = \{(x,y,z) \in \partial \Omega, |x| > 16, |y| > 16, z =
  64\}$ and
  $\Gamma_3 = \partial \Omega \backslash(\Gamma_1 \cup \Gamma_2)$. We
  consider a final time of $T=1$ and compute the solution on a mesh
  consisting of 16520 cells. We choose the following model parameters:
  $\tilde{\kappa} = 10^{-7}$, $c_0 = 0.5$, $\alpha = 0.5$,
  $E = 3 \times 10^4$, and $\nu = 0.45$.
\end{itemize}
Let
$\sigma = \mu \varepsilon(u) + \lambda \nabla \cdot u \mathbb{I} -
\alpha p \mathbb{I}$. The boundary conditions are given by
\begin{equation*}
  \sigma n = (0, -\sigma_0)^T\text{ on }\Gamma_1,\quad
  \sigma n = 0 \text{ on }\Gamma_2, \quad
  u = 0\text{ on }\Gamma_3, \quad
  p = 0\text{ on }\partial \Omega,     
\end{equation*}
with $\sigma_0 = 10^4$ in 2d and $\sigma_0=0.1$ in 3d. As initial
conditions we impose $u(x,0) = 0$ and $p(x,0) = 0$.

In \cref{tab:footing-prob} we list the average number of iterations
per time step required for MINRES to converge with preconditioner
$S_{\widehat{P}}$ for varying $\tau$. Once again, the preconditioner
is observed to be robust.

\begin{table}[tbp]
  \centering
  \begin{tabular}{c|c|c|c|c|c}
    \hline
    $\tau$ & 1 & 0.25 & 0.025 & 0.0025 & 0.0001 \\
    \hline
    EDG-HDG (2d) & 116 (251) & 116 (244) & 116 (225) & 116 (200) & 116 (200) \\
    \hline
    HDG (2d) & 95 (210) & 95 (206) & 95 (187) & 95 (171) & 95 (171) \\
    \hline
    EDG-HDG (3d) & 133 (224) & 133 (223) & 133 (223) & 133 (223) & 133 (223) \\
    \hline
    HDG (3d) & 87 (184) & 87 (184) & 87 (184) & 87 (184) & 87 (184) \\    
    \hline
  \end{tabular}
  \caption{The footing problem of \cref{ss:tc2}. We list the number of
    iterations required for MINRES, preconditioned by
    $S_{\widehat{P}}$ in both its exact and inexact form, to converge
    for different time steps $\tau$. The results for the inexact form
    of the preconditioner is shown in parentheses. }
  \label{tab:footing-prob}
\end{table}

\subsection{Test Case 3: A Simplified Brain Model}
\label{ss:tc3}

In this final example we study the performance of the preconditioner
$S_{\widehat{P}}$ when applied to a simplified 3D simulation of the
cerebrospinal fluid-tissue interaction in a human brain. We compute
the solution on the mesh presented in \cite[Chapter
3]{mardal2022mathematical}, which consists of 155953 cells. In
\cref{eq:biotcomp} we fix the parameters $c_0 = 3 \times 10^{-4}$,
$\alpha = 0.25$, and $E = 1500$, but vary the parameters $\nu$ and
$\kappa$. As boundary conditions we impose $u=0$ and $p=0.001$ on
$\partial \Omega$. When using ASP for the HDG discretization we take
two forward and two backward block Gauss--Seidel smoothing steps; for
EDG-HDG we take one forward and one backward block Gauss--Seidel
smoothing step. In \cref{fig:bothimages} we plot the displacement and
pressure. In \cref{tab:brain} we list the number of iterations
required for MINRES to converge for varying $\nu$ and
$\kappa$. Although we observe some variation in iteration count when
varying $\nu$ and $\kappa$, the variations are small.

\begin{figure}[tbp]
  \centering
  \begin{minipage}{0.45\textwidth}
    \centering
    \includegraphics[width=\linewidth]{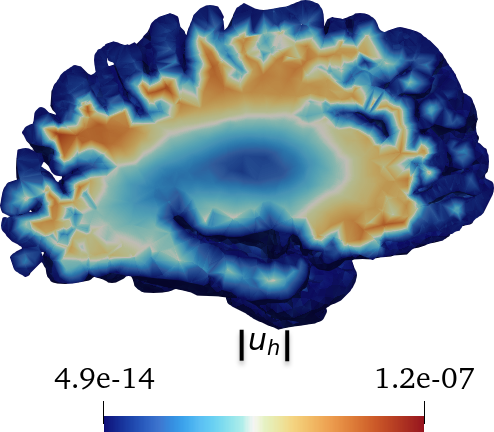}
  \end{minipage}
  \hfill
  \begin{minipage}{0.45\textwidth}
    \centering
    \includegraphics[width=\linewidth]{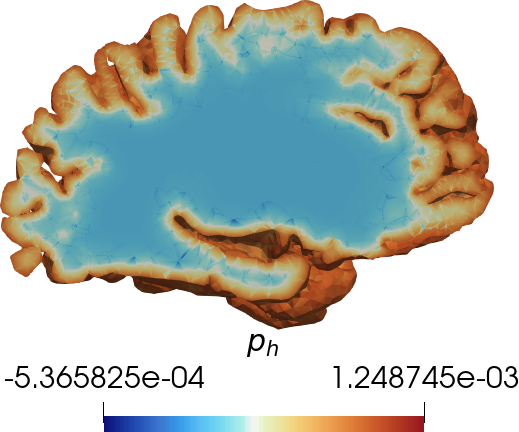}
  \end{minipage}
  \caption{Approximation of the displacement (left) and pressure
    (right) using the EDG-HDG discretization with $\nu = 0.45$ and
    $\kappa = 1.57\times 10^{-1}$.}
  \label{fig:bothimages}
\end{figure}

\begin{table}[tbp]
  \centering
  \begin{tabular}{c|c|c|c|c|c|c}
    \hline
    &\multicolumn{3}{c|}{EDG-HDG}&\multicolumn{3}{c}{HDG}\\
    \hline
    $\nu / \kappa$ & $3.75$ & $1.57\times 10^{-1}$ & $1.57\times 10^{-3}$& $3.75$ & $1.57\times 10^{-1}$ & $1.57\times 10^{-3}$ \\
    \hline
    0.34 & 116 (147) & 117 (146) & 148 (178) & 165 $(382)$ & 165 $(392)$ & 208 $(492)$ \\
    \hline
    0.45 & 156 (197) & 157 (196) & 200 (239) & 229 $(467) $& 231 $(472)$ & 290 $(590)$ \\
    \hline
  \end{tabular}
  \caption{The simplified-poroelastic brain model, see
    \cref{ss:tc3}. We list the number of iterations required for
    MINRES, preconditioned by $S_{\widehat{P}}$ in both its exact and
    inexact form, to converge for varying $\nu$ and $\kappa$. The
    results for the inexact form of the preconditioner is shown in
    parentheses.}
  \label{tab:brain}
\end{table}

\section{Conclusions}
\label{s:conclusions}

In this paper, we introduced parameter-robust preconditioners for the
reduced linear system derived from an HDG discretization of a
four-field formulation of the Biot-consolidation model. We first
developed a preconditioner for the non-condensed linear system of the
discretization using the Mardal and Winther framework
\cite{mardal2011preconditioning}. Next, we used the extension of this
framework to hybridized methods \cite{henriquez2025parameter} to
obtain a parameter-robust preconditioner for the reduced linear
system. Numerical examples in two and three dimensions confirm
robustness of the preconditioners for the condensed system.

\section*{Funding}

Jeonghun J. Lee acknowledges support from the National Science
Foundation through grant number DMS-2110781 and Sander Rhebergen
acknowledges support from the Natural Sciences and Engineering
Research Council of Canada through the Discovery Grant program
(RGPIN-2023-03237).

\bibliographystyle{plain}
\bibliography{refs}
\appendix
\section{Proof of \cref{lem:mainresultforcp}}
\label{ap:mainresultforcp}

Let us define the following norms on $\bar{V}_h$ and $\bar{Q}_h$,
respectively:
\begin{align*}
  \tnorm{\bar{v}_h}_{h,u}^2
  &:= \sum_{K \in \mathcal{T}_h} h_K^{-1}\norm[0]{\bar{v}_h - m_K(\bar{v}_h)}_{\partial K}^2
  && \forall \bar{v}_h \in \bar{V}_h,
  \\
  \tnorm{\bar{q}_h}_{h,p}^2
  &:= \sum_{K \in \mathcal{T}_h} h_K^{-1}\norm[0]{\bar{q}_h - m_K(\bar{q}_h)}_{\partial K}^2
  && \forall \bar{q}_h \in \bar{Q}_h,
\end{align*}
where $m_K(w) := |\partial K|^{-1}\langle w, 1 \rangle_{\partial
  K}$. Furthermore, we define $m(w)$ such that $m(w)|_K = m_K(w)$ for
$K \in \mathcal{T}_h$. We then recall the following results.

\begin{lemma}
  \label{lem:estimate-m_K}
  Let $\bar{q}_h \in \bar{Q}_h$ be given and let
  $\tilde{l}_p(\bar{q}_h)$ be as defined in
  \cref{lem:tildel}. There exists a uniform constant
  $c > 0$ such that
  \begin{align}
    \label{eq:estimate-m_K}
    \min\cbr[0]{c_0 + \alpha^2 \lambda^{-1}, h_K^{-2}\kappa} \norm[0]{m_K(\bar{q}_h)}_{ K}^2
    & \le c \del[1]{ (c_0 + \alpha^2 \lambda^{-1}) \norm[0]{\tilde{l}_p(\bar{q}_h)}_K^2
      + \kappa \eta h^{-1}_K \norm[0]{\tilde{l}_p(\bar{q}_h) - \bar{q}_h}_{\partial K}^2},
    \\
    \label{eq:tildeah-estimate}
    \kappa \tnorm{\bar{q}_h}_{h,p}^2 & \le c \tilde{a}_h((\tilde{l}_p(\bar{q}_h),\bar{q}_h),(\tilde{l}_p(\bar{q}_h),\bar{q}_h)).
  \end{align}
\end{lemma}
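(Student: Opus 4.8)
The plan is to prove the two estimates \cref{eq:estimate-m_K,eq:tildeah-estimate} separately, writing $p := \tilde{l}_p(\bar{q}_h)$ for brevity and setting $a := c_0 + \alpha^2\lambda^{-1}$. I would first establish \cref{eq:estimate-m_K}, which turns out to be elementary and does not actually require the defining relation of the local solver: it holds for any $p \in Q_h$ in place of $\tilde{l}_p(\bar{q}_h)$. The idea is to split the constant $m_K(\bar{q}_h)$ by linearity of the averaging operator as $m_K(\bar{q}_h) = m_K(\bar{q}_h - p) + m_K(p)$. For the second piece I would use Cauchy--Schwarz on $\partial K$ together with the discrete trace inequality $\norm[0]{p}_{\partial K} \le C h_K^{-1/2}\norm[0]{p}_K$ and the scalings $|K|\sim h_K^d$, $|\partial K|\sim h_K^{d-1}$ to obtain $\norm[0]{m_K(p)}_K \le C\norm[0]{p}_K$; for the first piece the same Cauchy--Schwarz and scaling argument gives $\norm[0]{m_K(\bar{q}_h - p)}_K \le C h_K^{1/2}\norm[0]{\bar{q}_h - p}_{\partial K}$. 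Squaring and adding yields $\norm[0]{m_K(\bar{q}_h)}_K^2 \le C(\norm[0]{p}_K^2 + h_K\norm[0]{\bar{q}_h - p}_{\partial K}^2)$.

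Multiplying this by $\min\cbr[0]{a, h_K^{-2}\kappa}$ then gives \cref{eq:estimate-m_K}: the contribution of $\norm[0]{p}_K^2$ is controlled because $\min\cbr[0]{a, h_K^{-2}\kappa}\le a$, while the boundary contribution is controlled because $\min\cbr[0]{a, h_K^{-2}\kappa}\le h_K^{-2}\kappa$, so that $\min\cbr[0]{a, h_K^{-2}\kappa}\,h_K \le h_K^{-1}\kappa \le \eta h_K^{-1}\kappa$ using $\eta > 1$. Choosing which bound on the minimum to apply to each term is essentially the only content here.

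For \cref{eq:tildeah-estimate} I would decompose the boundary oscillation as $\bar{q}_h - m_K(\bar{q}_h) = (\bar{q}_h - p) + (p - m_K(p)) + m_K(p - \bar{q}_h)$ and bound the three contributions to $\kappa h_K^{-1}\norm[0]{\bar{q}_h - m_K(\bar{q}_h)}_{\partial K}^2$. The first and third are immediate: $\norm[0]{\bar{q}_h - p}_{\partial K}$ appears directly, and $\norm[0]{m_K(p - \bar{q}_h)}_{\partial K} \le \norm[0]{p - \bar{q}_h}_{\partial K}$ since the boundary average is the $L^2(\partial K)$-orthogonal projection onto constants, hence a contraction. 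The middle term is the key one: because $m_K(p)$ is that same projection, $\norm[0]{p - m_K(p)}_{\partial K} = \min_{c}\norm[0]{p-c}_{\partial K} \le \norm[0]{p - \bar{p}_K}_{\partial K}$ with $\bar{p}_K$ the cell mean; a scaled trace inequality followed by the Poincar\'e inequality $\norm[0]{p - \bar{p}_K}_K \le C h_K \norm[0]{\nabla p}_K$ then gives $\norm[0]{p - m_K(p)}_{\partial K} \le C h_K^{1/2}\norm[0]{\nabla p}_K$. Summing over $K$ shows that $\kappa\tnorm{\bar{q}_h}_{h,p}^2$ is bounded by a uniform constant times $\kappa\norm[0]{\nabla p}_{\mathcal{T}_h}^2 + \kappa\eta\sum_K h_K^{-1}\norm[0]{p - \bar{q}_h}_{\partial K}^2$.

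The final step, which I expect to be the main obstacle, is to absorb this quantity into $\tilde{a}_h((p,\bar{q}_h),(p,\bar{q}_h))$. This is a coercivity statement: after using a discrete trace inequality and Young's inequality to absorb the consistency term $-2\kappa\langle\nabla p\cdot n, p-\bar{q}_h\rangle_{\partial\mathcal{T}_h}$, one obtains $\tilde{a}_h((p,\bar{q}_h),(p,\bar{q}_h)) \ge c\del[1]{\kappa\norm[0]{\nabla p}_{\mathcal{T}_h}^2 + \kappa\eta\sum_K h_K^{-1}\norm[0]{p-\bar{q}_h}_{\partial K}^2 + a\norm[0]{p}_{\mathcal{T}_h}^2}$ for $\eta$ large enough. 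The subtlety is that \cref{eq:cobndatilde} is stated only for $\boldsymbol{q}_h \in \boldsymbol{Q}_h^0$, whereas here $\bar{q}_h \in \bar{Q}_h$ need not vanish on $\partial\Omega$; I would therefore note that this coercivity is proven element-by-element and never uses the boundary condition, so the same lower bound holds for the pair $(p,\bar{q}_h)$. Combining this with the summed estimate of the previous paragraph then yields \cref{eq:tildeah-estimate}.
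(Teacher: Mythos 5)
Your proof is correct, and it is worth noting that the paper itself does not actually prove this lemma: it cites \cite[Lemma 9]{henriquez2025parameter} for \cref{eq:estimate-m_K} and \cite[Lemma~5]{rhebergen2018preconditioning} for \cref{eq:tildeah-estimate}, so your argument supplies details the paper omits. Your route is the standard one for such estimates --- splitting through the boundary average $m_K$, using that $m_K$ is the $L^2(\partial K)$-orthogonal projection onto constants (hence a contraction and a best approximation), a discrete trace inequality with the scalings $|K|\sim h_K^d$ and $|\partial K|\sim h_K^{d-1}$, the Poincar\'e inequality, and elementwise coercivity of $\tilde{a}_h$ --- and it is consistent in spirit with the cited lemmas. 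Two of your observations deserve to be kept explicit: (i) neither estimate uses the defining relation \cref{eq:locprobreacdif} of the local solver, so both hold with $\tilde{l}_p(\bar{q}_h)$ replaced by an arbitrary element of $Q_h$, which makes the lemma strictly more elementary than its statement suggests; and (ii) the coercivity bound \cref{eq:cobndatilde} is stated only on $\boldsymbol{Q}_h^0$ while the lemma allows $\bar{q}_h\in\bar{Q}_h$, and your remark that the lower bound is assembled cell by cell and never invokes the boundary condition correctly closes that gap. The one caveat is your phrase ``for $\eta$ large enough'': coercivity of $\tilde{a}_h$ requires $\eta$ to exceed a constant determined by the discrete trace inequality, not merely $\eta>1$; since this is the same hypothesis already implicit in \cref{eq:cobndatilde} and \cref{eq:stabdh}, it introduces no new assumption, but it should be stated as inherited from those results rather than as a fresh requirement.
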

\begin{proof}
  Estimate \cref{eq:estimate-m_K} is proven in
  \cite[Lemma 9]{henriquez2025parameter}. Estimate
  \cref{eq:tildeah-estimate} is obtained by taking the same arguments
  as that of \cite[Lemma~5]{rhebergen2018preconditioning} and so we
  omit the details.
\end{proof}

We now prove \cref{lem:mainresultforcp}.

\begin{proof}[of \cref{lem:mainresultforcp}]
  We split the proof into three steps. 
  \\
  \textbf{Step 1.} By definition of
  $\tnorm{\cdot}_{\boldsymbol{X}_h}$, we claim that it is sufficient
  to prove existence of a uniform $c>0$ such that
  \begin{align}
    \notag
    &\tnorm{(l_u(\bar{y}_h),\bar{v}_h,l_{p_T}(\bar{y}_h),\bar{q}_{Th},l_z(\bar{y}_h),l_p(\bar{y}_h),\bar{q}_h)}_{\boldsymbol{X}_h}^2
    \\
    \notag
    =
    & \tnorm{(l_u(\bar{y}_h),\bar{v}_h)}_v^2
      + \tnorm{(l_{p_T}(\bar{y}_h),\bar{q}_{Th})}_{q_T}^2
      + \tnorm{l_z(\bar{y}_h)}_w^2
      + \tnorm{(l_p(\bar{y}_h),\bar{q}_h)}_q^2
    \\
    \label{eq:desired-estimate}
    \le& c \eta \del[2]{
       \eta \mu \tnorm{\bar{v}_h}_{h,u}^2
       + \mu^{-1} \eta^{-1} \norm[0]{h_K^{1/2}\bar{q}_{Th}}_{\partial\mathcal{T}_h}^2
       + \tilde{a}_h((\tilde{l}_p(\bar{q}_h),\bar{q}_h),(\tilde{l}_p(\bar{q}_h),\bar{q}_h)) }.
  \end{align}
  The desired result then follows by noting that there exists a
  uniform constant $C>0$ such that
  $C \mu \tnorm{\bar{v}_h}_{h,u}^2 \le
  d_h((\tilde{l}_u(\bar{v}_h),\bar{v}_h),
  (\tilde{l}_u(\bar{v}_h),\bar{v}_h))$ for all
  $\bar{v}_h \in \bar{V}_h$ (see \cite[Lemma
  5]{rhebergen2018preconditioning}).
  \\
  From the bilinear form \cref{eq:definitionah} in the local problem
  \cref{eq:localproblembiot},
  \begin{subequations}
    \label{eq:local-ah-reform-eqs}
    \begin{align}
      \label{eq:local-ah-reform-eq1}
      (\mu \varepsilon(l_{u}(\bar{y}_h)), \varepsilon({v}'))_{K}
      +\mu \eta h_K^{-1}\langle (l_{u}(\bar{y}_h) - \bar{v}_h),{v}'\rangle_{\partial K}
      \\
      \notag
      - \langle \mu \varepsilon(l_{u}(\bar{y}_h))n, {v}'\rangle_{\partial K}
      - \langle \mu \varepsilon({v}')n, l_{u}(\bar{y}_h)-\bar{v}_h\rangle_{\partial K}   
      - (l_{p_T}(\bar{y}_h), \nabla \cdot v')_K
      & = - \langle \bar{q}_{Th}, v' \cdot n\rangle_{\partial K}, 
      \\
      \label{eq:local-ah-reform-eq2}
      - (q_T', \nabla \cdot l_u(\bar{y}_h))_K
      + \lambda^{-1}(\alpha l_p(\bar{y}_h)
      - l_{p_T}(\bar{y}_h), q_T')_K
      & = 0, 
      \\
      \label{eq:local-ah-reform-eq3}
      \del[1]{\kappa^{-1}l_z(\bar{y}_h), w'}_{K}     
      - (l_p(\bar{y}_h), \nabla \cdot w')_K
      & = -\langle \bar{q}_h, w'\cdot n\rangle_{\partial K},
      \\
      \label{eq:local-ah-reform-eq4}
      - (c_0 l_p(\bar{y}_h), q')_{K}
      - (q', \nabla \cdot l_z(\bar{y}_h))_K
      - \lambda^{-1}\del[1]{\alpha l_{p}(\bar{y}_h)
      - l_{p_T}(\bar{y}_h), \alpha q' }_{K}
      & = 0,
    \end{align}
  \end{subequations}
  for all
  $(v', q_T', w', q')\in V(K)\times Q(K)\times V(K) \times Q(K)$. For
  ease of notation we
  introduce
  \begin{equation*}
    \Psi^0|_K := l_{\Psi}(0,0,m_K(\bar{q}_h)),
    \qquad
    \Psi^1|_K := l_{\Psi}(\bar{v}_h,\bar{q}_{Th},\bar{q}_h - m_K(\bar{q}_h)),
    \qquad \text{for }\Psi \in \cbr[0]{u,p_T,z,p}
  \end{equation*}
  and define $m(\bar{q}_h)$ by the function from $\mathcal{T}_h$ to
  $\mathbb{R}$ such that its value on $K$ is $m_K(\bar{q}_h)$. In the
  rest of proof we will show that
  $\tnorm{(u^0, \bar{0},p_T^0, \bar{0}, z^0, p^0,
    m(\bar{q}_h))}_{\boldsymbol{X}_h}^2 \le c \eta
  \tilde{a}_h((\tilde{l}_p(\bar{q}_h),\bar{q}_h),(\tilde{l}_p(\bar{q}_h),\bar{q}_h))$
  and
  \begin{multline}
    \label{eq:A3formu1}
    \tnorm{(u^1, \bar{v}_h,p_T^1, \bar{q}_{Th}, z^1, p^1, \bar{q}_h
      -m(\bar{q}_h))}_{\boldsymbol{X}_h}^2
    \\
    \le c \eta \del[2]{
      \eta \mu \tnorm{\bar{v}_h}_{h,u}^2
      + \mu^{-1} \eta^{-1} \norm[0]{h_K^{1/2}\bar{q}_{Th}}_{\partial\mathcal{T}_h}^2
      + \tilde{a}_h((\tilde{l}_p(\bar{q}_h),\bar{q}_h),(\tilde{l}_p(\bar{q}_h),\bar{q}_h)) }.
  \end{multline}
  \Cref{eq:desired-estimate} then follows by the triangle inequality.
  \\
  \textbf{Step 2.} To prove
  $\tnorm{(u^0, \bar{0},p_T^0, \bar{0}, z^0, p^0,
    m(\bar{q}_h))}_{\boldsymbol{X}_h}^2 \le
  c \eta \tilde{a}_h((\tilde{l}_p(\bar{q}_h),\bar{q}_h),(\tilde{l}_p(\bar{q}_h),\bar{q}_h))$
  we need to prove
  \begin{equation}
    \label{eq:desired-estimate-0}
    \tnorm{(u^0, \bar{0},p_T^0, \bar{0}, z^0, p^0, m(\bar{q}_h))}_{\boldsymbol{X}_h}^2
    \le c \eta \sum_{K\in \mathcal{T}_h} \min \cbr{c_0 + \alpha^2 \lambda^{-1}, h_K^{-2} \kappa }
    \norm[0]{m_K(\bar{q}_h)}_K^2.
  \end{equation}
  The result then follows by \cref{eq:estimate-m_K,eq:cobndatilde}. We
  therefore proceed by proving \cref{eq:desired-estimate-0}.
  
  By the definitions of the $\Psi^0$'s and
  \cref{eq:local-ah-reform-eqs} with $v'=u^0$, $q_T' = -{p_T}^0$,
  $w'=z^0$, $q'=-p^0$, we find that
  \begin{multline}
    \label{eq:u0p0pT0z0}
    (\mu \varepsilon({u}^0), \varepsilon(u^0))_{K}
    +\mu \eta h_K^{-1}\langle
    {u}^0,u^0\rangle_{\partial K} - 2\langle \mu \varepsilon({u}^0)n,
    u^0\rangle_{\partial K} + (\kappa^{-1}z^0, z^0)_K + (c_0 p^0, p^0)_K
    \\
    + \lambda^{-1}(\alpha p^0 - {p_T}^0, \alpha p^0 - {p_T}^0)_K
    = -\langle m_K(\bar{q}_h), z^0 \cdot n\rangle_{\partial K}.
  \end{multline}
  By the Cauchy--Schwarz inequality, a discrete trace inequality (see
  \cite[Lemma 1.46]{di2011mathematical}) and Young's inequality, we
  find
  \begin{equation}
    \label{eq:Psi0-intermediate-1}
    |-\langle m_K(\bar{q}_h), z^0 \cdot n\rangle_{\partial K}|
    \le
    C \kappa h_K^{-2} \norm[0]{m_K(\bar{q}_h)}_K^2
    + \tfrac{1}{2} (\kappa^{-1}z^0, z^0)_K.
  \end{equation}
  Furthermore,
  $\langle m_K(\bar{q}_h), z^0 \cdot n\rangle_{\partial
    K}=(m_K(\bar{q}_h), \nabla \cdot z^0 )_K$ and
  $\nabla \cdot z^0 = -c_0 p^0 - \lambda^{-1}\alpha (\alpha p^0 -
  p_T^0)$. Combining these two identities and using Young's inequality we find that
  \begin{equation}
    \label{eq:Psi0-intermediate-2}
    |-\langle m_K(\bar{q}_h), z^0 \cdot n\rangle_{\partial K}| 
    \le C (c_0 + \lambda^{-1}\alpha^2) \norm[0]{m_K(\bar{q}_h)}_K^2
    + \tfrac{1}{2} (c_0p^0, p^0)_K
    + \tfrac{1}{2} \lambda^{-1}(\alpha p^0 - p_T^0, \alpha p^0 - p_T^0)_K.
  \end{equation}
  Combining now
  \cref{eq:u0p0pT0z0,eq:Psi0-intermediate-1,eq:Psi0-intermediate-2},
  \begin{multline}
    \label{eq:u0p0pT0z0ineq}
    (\mu \varepsilon({u}^0), \varepsilon(u^0))_{K}
    +\mu \eta h_K^{-1}\langle {u}^0,u^0\rangle_{\partial K}
    - 2\langle \mu \varepsilon({u}^0)n, u^0\rangle_{\partial K}
    + (\kappa^{-1}z^0, z^0)_K
    + (c_0 p^0, p^0)_K
    \\
    + \lambda^{-1}(\alpha p^0 - {p_T}^0, \alpha p^0 - {p_T}^0)_K
    \le c \min \cbr[1]{c_0 + \alpha^2\lambda^{-1},  h_K^{-2}\kappa } \norm[0]{m_K(\bar{q}_h)}_K^2.
  \end{multline}
  By \cref{eq:stabdh} and \cref{eq:u0p0pT0z0ineq} with summation over
  $K\in \mathcal{T}_h$,
  \begin{equation}
    \label{eq:result1}
    \tnorm{(u^0, \bar{0})}_v^2
    + \tnorm{z^0}_w^2
    + c_0\norm[0]{p^0}_{\mathcal{T}_h}^2 + \lambda^{-1}\norm[0]{\alpha p^0 - p_T^0}_{\mathcal{T}_h}^2
    \le c \sum_{K\in \mathcal{T}_h} \min \cbr[1]{c_0 + \alpha^2 \lambda^{-1}, h_K^{-2} \kappa} \norm[0]{m_K(\bar{q}_h)}_K^2.
  \end{equation}
  Next, we prove the following two inequalities:
  \begin{subequations}
    \label{eq:proofcomplete-ab}
    \begin{align}
      \label{eq:proofcomplete-a}
      \kappa^{1/2} \tnorm{(p^0, m(\bar{q}_h))}_{1,p} &\le c \eta^{1/2} \tnorm{z^0}_w,
      \\
      \label{eq:proofcomplete-b}
      \tnorm{(p_T^0, \bar{0})}_{q_T} &\le c\tnorm{(u^0, \bar{0})}_v.
    \end{align}
  \end{subequations}
  To prove \cref{eq:proofcomplete-a}, we follow steps similar to those
  of the proof of \cite[Lemma 4.2]{cesmelioglu2024strongly}. By
  definition of the local degrees of freedom of BDM elements
  \cite[Prop. 2.3.2]{boffi2013mixed}, given
  $(p^0,\bar{q}_h) \in \boldsymbol{Q}_h^0$, we define the BDM function
  $\tilde{z}_h \in V_h \cap H(\text{div};\Omega)$ such that
  \begin{subequations}
    \label{eq:deftildezh}
    \begin{align}
      (\tilde{z}_h, w_h)_K &= \kappa (\nabla p^0, w_h)_K
      && \forall w_h \in \mathcal{N}_{k-2}(K), \ \forall K \in \mathcal{T}_h,
      \\
      \langle \tilde{z}_h\cdot n, \bar{w}_h\rangle_{\partial K}
                           &= \kappa \eta h_K^{-1}\langle m_K(\bar{q}_h) - p^0, \bar{w}_h\ \rangle_{\partial K}
      && \forall \bar{w}_h \in \mathcal{R}_K(\partial K), \ \forall K \in \mathcal{T}_h,
    \end{align}
  \end{subequations}
  where $\mathcal{N}_{k-2}(K)$ is the N\'ed\'elec space and
  $\mathcal{R}_k(\partial K) := \cbr[0]{\bar{w} \in L^2(\partial
    K)\, : \, \bar{w}|_F \in P_k(F),\ \forall F \subset \partial
    K}$. Choosing $w_h = \nabla p^0$ and
  $\bar{w}_h = m_K(\bar{q}_h) - p^0$ in \cref{eq:deftildezh} we
  obtain
  \begin{subequations}
    \begin{align}
      \label{eq:tildezhgradp0-a}
      (\tilde{z}_h, \nabla p^0)_K &= \kappa \norm[0]{\nabla p^0}_K^2 && \forall K \in \mathcal{T}_h,
      \\
      \label{eq:tildezhgradp0-b}
      \langle \tilde{z}_h\cdot n, m_K(\bar{q}_h) - p^0\rangle_{\partial K}
                                  &= \kappa \eta h_K^{-1} \norm[0]{m_K(\bar{q}_h)-p^0}_{\partial K}^2
                                                                     && \forall K \in \mathcal{T}_h.
    \end{align}            
  \end{subequations}
  Adding \cref{eq:tildezhgradp0-a,eq:tildezhgradp0-b}, integrating by
  parts, and summing over all $K \in \mathcal{T}_h$, we find
  \begin{equation}
    \label{eq:p0mkbarq-a1n}
    b_h(\tilde{z}_h, (p^0, m(\bar{q}_h)))
    = \kappa \tnorm{ (p^0, m(\bar{q}_h)) }_{1,p}^2.
  \end{equation}
  Next, similar to the proof of \cite[Lemma 4.4]{linke2018quasi}, we
  have
  \begin{align*}
    \norm[0]{\tilde{z}_h}_K^2
    + \sum_{F \in \mathcal{F}_K}h_F\norm[0]{\tilde{z}_h \cdot n}_F^2
    &\lesssim \sup_{\substack{ w_h \in \mathcal{N}_{k-2}(K)^3 \\ \norm[0]{w_h}_{K}=1}} | (\tilde{z}_h, w_h)_K |^2
    + \sup_{\substack{ \bar{w}_h \in R_k(\partial K) \\ \norm[0]{\bar{w}_h}_{\partial K}=1}} h_K |\langle \tilde{z}_h \cdot n, \bar{w}_h \rangle_F |^2
    \\
    &= \sup_{\substack{ w_h \in \mathcal{N}_{k-2}(K)^3 \\ \norm[0]{w_h}_{K}=1}} \kappa^2 | (\nabla p^0, w_h)_K |^2
    +\sup_{\substack{ \bar{w}_h \in R_k(\partial K) \\ \norm[0]{\bar{w}_h}_{\partial K}=1}} \kappa^2 \eta^2 h_Kh_K^{-2} |\langle m_K(\bar{q}_h)-p^0, \bar{w}_h \rangle_{\partial K} |^2
    \\
    &\le \kappa^2\norm[0]{\nabla p^0}_{K}^2+ \kappa^2\eta^2h_K^{-1} \norm[0]{m_K(\bar{q}_h) - p^0}_{\partial K}^2
  \end{align*}
  Dividing by $\kappa$ and summing over all $K \in \mathcal{T}_h$, we
  find that
  \begin{equation}
    \label{eq:p0mkbarq-bnn}
    \tnorm{\tilde{z}_h}_w
    \le c \kappa^{1/2} \eta^{1/2} \tnorm{(p^0,m(\bar{q}_h))}_{1,p}.
  \end{equation}
  Then, by definition of $z^0,p^0$, we find from
  \cref{eq:local-ah-reform-eq3}, after summing over all cells
  $K \in \mathcal{T}_h$, that
  $b_h(w', (p^0,m(\bar{q}_h))) =
  \kappa^{-1}(z^0,w')_{\mathcal{T}_h}$. Choosing $w'=\tilde{z}_h$,
  using \cref{eq:p0mkbarq-a1n}, the Cauchy--Schwarz inequality, and
  \cref{eq:p0mkbarq-bnn}, we obtain \cref{eq:proofcomplete-a}.
  
  To prove \cref{eq:proofcomplete-b} we note that a consequence of
  \cref{eq:stabbhQh} is that for given
  $(p_T^0,\bar{0}) \in \boldsymbol{Q}_h$ there exists
  $\tilde{\boldsymbol{v}}_h = (\tilde{v}_h, \tilde{\bar{v}}_h) \in
  \boldsymbol{V}_h$ such that
  \begin{subequations}
    \begin{align}
      \label{eq:pT0vtilde_a}
      \tnorm{(p_T^0, \bar{0})}_{q_T}^2
      & = b_h(\tilde{v}_h, (p_T^0,\bar{0}))
        = -(p_T^0,\nabla \cdot \tilde{v}_h)_{\mathcal{T}_h},
      \\
      \label{eq:pT0vtilde_b}
      \tnorm{\tilde{\boldsymbol{v}}_h}_v
      &\le c \tnorm{(p_T^0, \bar{0})}_{q_T}.
    \end{align}
  \end{subequations}
  Recalling that $(u^0, p_T^0)$ satisfies
  \cref{eq:local-ah-reform-eq1} with vanishing $\bar{v}_h$ and
  $\bar{q}_{Th}$, the summation over $K$ of this equation with
  $v'|_K = \tilde{v}_h - m_K(\tilde{\bar{v}}_h)$, and using
  \cref{eq:pT0vtilde_a} gives
  \begin{equation}
    \label{eq:dhu0bar0}
    d_h((u^0, \bar{0}), (\tilde{v}_h - m(\tilde{\bar{v}}_h), \bar{0}))
    = -\tnorm{(p_T^0, \bar{0})}_{q_T}^2. 
  \end{equation}
  By \cref{eq:dhbound} we have
  \begin{equation}
    \label{eq:dhu0bar0bound}
    |d_h((u^0, \bar{0}), (\tilde{v}_h - m(\tilde{\bar{v}}_h), \bar{0}))|
    \le c \tnorm{(u^0, \bar{0})}_v \tnorm{(\tilde{v}_h - m(\tilde{\bar{v}}_h), \bar{0})}_v.
  \end{equation}
  Note that
  $\tnorm{(\tilde{v}_h - m(\tilde{\bar{v}}_h), \bar{0})}_v^2 = \mu
  \norm[0]{\varepsilon(\tilde{v}_h)}_{\mathcal{T}_h}^2 + \mu \eta
  \norm[0]{h_K^{-1}(\tilde{v}_h - m(\tilde{\bar{v}}_h))}_{\partial
    \mathcal{T}_h}^2$. Using the triangle inequality and steps similar
  to those used to prove
  \cite[eq. (47)]{rhebergen2018preconditioning}, Korn's inequality
  \cite{brenner2004korn}, using that $\eta > 1$, and
  \cref{eq:pT0vtilde_b},
  \begin{equation}
    \label{eq:vtildemkvtilde}
    \mu^{1/2} \norm[0]{h_K^{-1/2}(\tilde{v}_h-m_K(\tilde{\bar{v}}_h))}_{\partial \mathcal{T}_h}
    \le c \tnorm{\tilde{\boldsymbol{v}}_h}_v
    \le c \tnorm{(p_{T}^0,\bar{0})}_{q_T},
  \end{equation}
  so that
  $\tnorm{(\tilde{v}_h - m(\tilde{\bar{v}}_h), \bar{0})}_v \le c
  \tnorm{(p_{T}^0,\bar{0})}_{q_T}$. Then, using this in
  \cref{eq:dhu0bar0bound},
  $|d_h((u^0, \bar{0}), (\tilde{v}_h - m(\tilde{\bar{v}}_h),
  \bar{0}))| \le c \tnorm{(u^0,\bar{0})}_v\tnorm{(p_{T}^0,
    \bar{0})}_{q_T}$. Combining this result with \cref{eq:dhu0bar0} we
  obtain \cref{eq:proofcomplete-b}.
  
  Finally, we note that
  \begin{equation}
    \label{eq:p0p0ineq}
    \begin{split}
      ((c_0 + \alpha^2 \lambda^{-1})p^0, p^0)_{\mathcal{T}_h}
      &\le c_0 \norm[0]{p^0}_{\mathcal{T}_h}^2
        + 2 \lambda^{-1} \norm[0]{\alpha p^0 - p_T^0}_{\mathcal{T}_h}^2
        + 2 \lambda^{-1}\norm[0]{p_T^0}_{\mathcal{T}_h}^2
      \\
      &\le c_0 \norm[0]{p^0}_{\mathcal{T}_h}^2
        + 2 \lambda^{-1} \norm[0]{\alpha p^0 - p_T^0}_{\mathcal{T}_h}^2
        + 2 c_l\tnorm{(p_T^0,\bar{0})}_{q_T}^2,
    \end{split}
  \end{equation}
  where we used that $\mu \lambda^{-1}\le c_l$. Combining
  \cref{eq:result1,eq:proofcomplete-ab,eq:p0p0ineq} and using the
  definitions of the norms completes the proof of
  \cref{eq:desired-estimate-0}.
  \\
  \textbf{Step 3.} To prove \cref{eq:A3formu1}, consider
  \cref{eq:local-ah-reform-eqs} for $(u^1, p_T^1, z^1, p^1)$. Choosing
  $(v',q_T',w',q')= (u^1-m_K(\bar{v}_h),-p_T^1,z^1,-p^1)$ in the
  equations, and rearranging, we find that
  \begin{align*}
    & \mu \norm[0]{\varepsilon(u^1)}_K^2
      + 2\mu \langle \varepsilon(u^1)n, \bar{v}_h-u^1 \rangle_{\partial K}
      + \eta\mu h_K^{-1} \norm[0]{u^1-\bar{v}_h}_{\partial K}^2
    \\
    & + \lambda^{-1} \norm[0]{\alpha p^1 - p_T^1}_K^2 + (\kappa^{-1} z^1, z^1)_K + (c_0 p^1, p^1)_K 
    \\
    &=
      \mu \langle \varepsilon(u^1)n, \bar{v}_h-m_K(\bar{v}_h) \rangle_{\partial K}
      + \eta\mu h_K^{-1} \langle \bar{v}_h-m_K(\bar{v}_h), \bar{v}_h-u^1 \rangle_{\partial K}
    \\
    &\quad - \langle \bar{q}_{Th}, (u^1-\bar{v}_h) \cdot n\rangle_{\partial K}
      - \langle \bar{q}_{Th}, (\bar{v}_h-m_K(\bar{v}_h)) \cdot n\rangle_{\partial K}
      - \langle \bar{q}_h - m_K(\bar{q}_h), z^1\cdot n \rangle_{\partial K}
    \\
    &\le \tfrac{c_d}{2} (\mu \norm[0]{\varepsilon(u^1)}_K^2
      + \eta\mu h_K^{-1} \norm[0]{u^1-\bar{v}_h}_{\partial K}^2)
      + c \eta \mu h_K^{-1} \norm[0]{\bar{v}_h - m_K(\bar{v}_h)}_{\partial K}^2
      + c \mu^{-1} \eta^{-1} \norm[0]{h_K^{1/2}\bar{q}_{Th}}_{\partial K}^2
    \\
    &\quad + c\kappa h_K^{-1} \norm[0]{\bar{q}_h - m_K(\bar{q}_h)}_{\partial K}^2
      + \tfrac{1}{2} (\kappa^{-1} z^1, z^1)_K,
  \end{align*}
  where for the inequality we used the Cauchy--Schwarz inequality, a
  discrete trace inequality, and Young's inequalities. The above
  estimate, with the summation over $K \in \mathcal{T}_h$ and
  \cref{eq:stabdh}, leads to
  \begin{equation}
    \label{eq:u1z1p1p1bound}
    \begin{split}
      &\tnorm{(u^1, \bar{v}_h)}_v^2
        + \tnorm{z^1}_{w}^2
        + \lambda^{-1} \norm[0]{\alpha p^1 - p_T^1}_{\mathcal{T}_h}^2
        + (c_0 p^1, p^1)_{\mathcal{T}_h} 
      \\
      &\le c (\eta\mu\tnorm{\bar{v}_h}_{h,u}^2
        + \mu^{-1} \eta^{-1} \norm[0]{h_K^{1/2}\bar{q}_{Th}}_{\partial \mathcal{T}_h}^2
        + \kappa \tnorm{\bar{q}_h}_{h,p}^2)
      \\
      &\le c \del[2]{
        \eta \mu \tnorm{\bar{v}_h}_{h,u}^2
        + \mu^{-1} \eta^{-1} \norm[0]{h_K^{1/2}\bar{q}_{Th}}_{\partial\mathcal{T}_h}^2
        + \tilde{a}_h((\tilde{l}_p(\bar{q}_h),\bar{q}_h),(\tilde{l}_p(\bar{q}_h),\bar{q}_h)) },      
    \end{split}
  \end{equation}
  where we used \cref{eq:tildeah-estimate} to obtain the last
  inequality. Furthermore, using similar steps used to prove
  \cref{eq:proofcomplete-a}, it holds that
  \begin{equation}
    \label{eq:p1qbarmqbarbound1p}
    \kappa^{1/2} \tnorm{(p^1, \bar{q}_h -m(\bar{q}_h))}_{1,p} \le 
    c \eta^{1/2} \tnorm{z^1}_w.
  \end{equation}
  We next prove
  \begin{equation}
    \label{eq:pT1barqqTboundu1v}
    \tnorm{(p_T^1,\bar{q}_{Th})}_{q_T}
    \le c\del[2]{\tnorm{(u^1, \bar{v}_h)}_v + \mu^{-1/2}\eta^{-1/2}\norm[0]{h_K^{1/2}\bar{q}_{Th}}_{\partial \mathcal{T}_h}}.
  \end{equation}
  Since
  $\tnorm{(p_T^1, \bar{q}_{Th})}_{q_T}^2 = \mu^{-1}
  \norm[0]{p_T^1}_{\mathcal{T}_h}^2 + \mu^{-1} \eta^{-1}
  \norm[0]{h_K^{1/2} \bar{q}_{Th}}_{\partial
    \mathcal{T}_h}^2$ it is sufficient to prove
  \begin{equation}
    \label{eq:pT1barqqTboundu1v-b}
    \mu^{-1/2} \norm[0]{p_T^1}_{\mathcal{T}_h}
    = \tnorm{(p_T^1, \bar{0})}_{q_T}
    \le c\del[2]{\tnorm{(u^1, \bar{v}_h)}_v + \mu^{-1/2} \eta^{-1/2} \norm[0]{h_K^{1/2}\bar{q}_{Th}}_{\partial \mathcal{T}_h}}.
  \end{equation}
  To show this, we note that a consequence of \cref{eq:stabbhQh} is
  that for given $(p_T^1, \bar{0}) \in \boldsymbol{Q}_h$ there exists
  $\tilde{\boldsymbol{v}}_h = (\tilde{v}_h, \tilde{\bar{v}}_h) \in
  \boldsymbol{V}_h$ such that
  \begin{subequations}
    \begin{align}
      \label{eq:pT0vtilde_ann}
      \tnorm{(p_T^1, \bar{0})}_{q_T}^2
      & = b_h(\tilde{v}_h, (p_T^1,\bar{0})),
      \\
      \label{eq:pT0vtilde_bnn}
      \tnorm{\tilde{\boldsymbol{v}}_h}_v
      &\le c \tnorm{(p_T^1, \bar{0})}_{q_T}.
    \end{align}
  \end{subequations}
  From \cref{eq:local-ah-reform-eq1} we note that $u^1,p_T^1$ satisfy:
  \begin{multline*}
    (\mu \varepsilon(u^1), \varepsilon({v}'))_{K}
    +\mu \eta h_K^{-1}\langle (u^1 - \bar{v}_h),{v}'\rangle_{\partial K}
    - \langle \mu \varepsilon(u^1)n, {v}'\rangle_{\partial K}
    - \langle \mu \varepsilon({v}')n, u^1-\bar{v}_h\rangle_{\partial K}
    \\
    - (p_T^1, \nabla \cdot v')_K
    = - \langle \bar{q}_{Th}, v' \cdot n\rangle_{\partial K}.
  \end{multline*}
  Summing over all $K$ results in
  $d_h((u^1,\bar{v}_h),(v',\bar{0})) + b_h(v',(p_T^1,\bar{q}_{Th})) =
  0$. Choosing $v'|_K = \tilde{v}_h - m_K(\tilde{\bar{v}}_h)$ and
  using \cref{eq:pT0vtilde_ann}, we find:
  \begin{equation}
    \label{eq:dhu1barvheqpT1norm}
    d_h((u^1,\bar{v}_h),(\tilde{v}_h - m(\tilde{\bar{v}}_h),\bar{0}))
    + b_h(\tilde{v}_h - m(\tilde{\bar{v}}_h),(0,\bar{q}_{Th}))
    =
    - b_h(\tilde{v}_h,(p_T^1,\bar{0}))           
    =
    - \tnorm{(p_T^1, \bar{0})}_{q_T}^2.
  \end{equation}
  By \cref{eq:dhbound} we have
  \begin{equation}
    \label{eq:dhu1bar1bound}
    \begin{split}
      |d_h((u^1, \bar{v}_h), (\tilde{v}_h - m(\tilde{\bar{v}}_h), \bar{0}))|
      &\le c \tnorm{(u^1, \bar{v}_h)}_v \tnorm{(\tilde{v}_h - m(\tilde{\bar{v}}_h), \bar{0})}_v
      \\
      &\le c \tnorm{(u^1, \bar{v}_h)}_v \tnorm{(p_T^1, \bar{0})}_{q_T},
    \end{split}
  \end{equation}
  where the second inequality follows the same steps as used in Step 2
  to prove
  $\tnorm{(\tilde{v}_h - m(\tilde{\bar{v}}_h), \bar{0})}_v \le c
  \tnorm{(p_T^1, \bar{0})}_{q_T}$ but now using
  \cref{eq:pT0vtilde_bnn} instead of \cref{eq:pT0vtilde_b}. By
  \cref{eq:bhboundedness1} we have
  \begin{equation}
    \label{eq:bhu1bar1bound}
    \begin{split}
      |b_h(\tilde{v}_h - m(\tilde{\bar{v}}_h), (0,\bar{q}_{Th}) )|
      &\le c \tnorm{(\tilde{v}_h - m(\tilde{\bar{v}}_h),\bar{0})}_v \tnorm{(0,\bar{q}_{Th})}_{q_T}
      \\
      &\le c \tnorm{(p_T^1, \bar{0})}_{q_T} \mu^{-1/2} \eta^{-1/2} \norm[0]{h_K^{1/2}\bar{q}_{Th}}_{\partial
        \mathcal{T}_h},
    \end{split}
  \end{equation}
  where for the second inequality we again used that
  $\tnorm{(\tilde{v}_h - m(\tilde{\bar{v}}_h), \bar{0})}_v \le c
  \tnorm{(p_T^1, \bar{0})}_{q_T}$ and where we used the definition of
  $\tnorm{(0,\bar{q}_{Th})}_{q_T}$. \Cref{eq:pT1barqqTboundu1v-b} now
  follows by combining
  \cref{eq:dhu1barvheqpT1norm,eq:dhu1bar1bound,eq:bhu1bar1bound}.

  Finally, as in \cref{eq:p0p0ineq} and using
  $\mu \lambda^{-1}\le c_l$, we note that
  \begin{equation}
    \label{eq:p1p1boundp1p1}
    ((c_0 + \alpha^2 \lambda^{-1})p^1, p^1)_{\mathcal{T}_h}
    \le c_0 \norm[0]{p^1}_{\mathcal{T}_h}^2
    + 2\lambda^{-1} \norm[0]{\alpha p^1 - p_T^1}_{\mathcal{T}_h}^2
    + 2c_l \tnorm{(p_T^1,\bar{0})}_{q_T}^2.
  \end{equation}
  Combining
  \cref{eq:u1z1p1p1bound,eq:p1qbarmqbarbound1p,eq:pT1barqqTboundu1v,eq:p1p1boundp1p1}
  and using the definitions of the norms completes the proof of
  \cref{eq:A3formu1}.
\end{proof}

\end{document}